\definecolor{RefColor}{rgb}{0,0,.85}  
\def\acks{\gdef\@thefnmark{}\@footnotetext}
\titleformat*{\section}{\large\bfseries}
\titleformat*{\subsection}{\bfseries}
\setlist[itemize]{leftmargin=1.5em}
\tikzstyle{mybraces}=[mirrorbrace/.style={
\theoremstyle{plain}
\declaretheoremstyle[postheadspace=.4em,headfont=\bfseries,bodyfont=\itshape,spaceabove=8pt,
spacebelow=10pt]{basic}
\theoremstyle{basic}
\declaretheorem[style=basic,name={Theorem}]{theorem}
\declaretheorem[style=basic,sibling=theorem,name={Proposition}]{proposition}
\declaretheorem[style=basic,sibling=theorem,name={Corollary}]{corollary}
\theoremstyle{definition}
\declaretheorem[style=definition,name={Remark}]{remark}
\declaretheorem[style=definition,name={Remark},numbered=no]{remark*}
\declaretheorem[style=definition,name={Example}]{example}
\declaretheorem[style=definition,name={Assumption}]{assumption}
\newenvironment{proplist}{\begin{enumerate}[
    label=(\roman{enumi}),
    ]}
{\end{enumerate}}
\newcommand{\argdot}{{\,\vcenter{\hbox{\tiny$\bullet$}}\,}}
\newcommand{\tagaligneq}{\refstepcounter{equation}\tag{\theequation}}
\newcommand{\msup}{\sup\nolimits}
\newcommand{\Tr}{\text{\rm Tr}}
\def\bX{\mathbf{X}}
\def\N{\mathbb{N}}
\def\P{\mathbb{P}}
\def\R{\mathbb{R}}
\def\cB{\mathcal{B}}
\def\cE{\mathcal{E}}
\def\cN{\mathcal{N}}
\newcommand{\mean}{\mathbb{E}}
\newcommand{\Var}{\text{\rm Var}}
\newcommand{\Kurt}{\text{\rm Kurt}}
\DeclareMathOperator{\msum}{\scaleobj{.8}{\sum}}
\DeclareMathOperator{\condind}{{\perp\!\!\!\perp}}
\newcommand{\tvec}{\text{\rm vec}}
\begin{document}

\begin{frontmatter}
    \title{Gaussian universality for approximately polynomial functions of high-dimensional data}
    
\begin{aug}
    \author{\fnms{Kevin Han}~\snm{Huang}},
    \author{\fnms{Morgane}~\snm{Austern}}
    \and    
    \author{\fnms{Peter}~\snm{Orbanz}}
    \\[1em]\normalfont\small University of Warwick, Harvard University and University College London
\end{aug}

    
    
    \begin{abstract}
        Gaussian universality results assert that the properties of many estimators remain unchanged when the input data are replaced by Gaussians. Such results have gained popularity in high-dimensional statistics and machine learning, as Gaussianity often substantially simplifies downstream analyses. Yet, an open question remains on when universality may cease to hold. To address this, we establish nearly optimal upper and lower bounds for Gaussian universality approximation, measured in Kolmogorov distance, over the class of  approximately polynomial functions of high-dimensional random vectors. The upper bounds adapt the invariance principle of Mossel, O'Donnell and Oleszkiewicz (2010) for high-dimensional vectors and functions beyond multilinear forms. As applications, we obtain a delta method for high-dimensional data with non-Gaussian limits, a necessary and sufficient condition for asymptotic normality, and simple estimators that are asymptotically normal but for which bootstrap fails to be consistent. We also extend recent results on the high-dimensional degeneracy of non-degenerate U-statistics, phase transition of MMD in two-sample tests with imbalanced data, and confidence spheres for high-dimensional averages. Our lower bound is constructive and shows that, for polynomials of even degree $m$, universality holds up to $m=o(\log n)$. As a corollary, the Gaussian polynomial approximation error of $\Omega(n^{-1/6m})$ is not improvable for even-degree U-statistics and V-statistics. Our results also explain how universality results for U-statistics and V-statistics differ significantly in their dependence on dimensions.

    \end{abstract}

        
        \begin{keyword}
        \kwd{High-dimensional statistics}
        \kwd{Limit theorems with non-Gaussian limits}
        \kwd{Berry-Ess\'een bounds}
        \kwd{Delta method}
        \kwd{Infinite-order U-statistics}
        \kwd{Wiener chaos approximation}        
        \kwd{Lindeberg's principle}
        \end{keyword}
        
    \end{frontmatter}

\section{Introduction} 
\label{sec:introduction}

Gaussian universality results approximate functions of $n$ random vectors in distribution by substituting each argument by a
Gaussian surrogate, typically as $n$ grows \citep{chatterjee2006generalization,mossel2010noise,korada2011applications}.
Informally, such results are of the form
\begin{equation}
  \label{gaussian:universality}
  f_n(X_1,\ldots,X_n)\;\approx\;f_n(Z_1,\ldots,Z_n) \qquad\text{ in distribution for large }n\;.
\end{equation}
$(X_i)_{i \leq n}$ is a set of independent random vectors taking values in $\R^d$.
The approximation substitutes the data by 
independent Gaussian vectors $(Z_i)_{i \leq n}$, where each $Z_i$ has the same mean and covariance as $X_i$. For $d=1$, the central limit theorem and Wigner's semicircle law are both special cases: In the former case, $f_n$ is an empirical average, and in the latter, $f_n$ is the empirical spectral distribution of a random matrix. We shall measure the approximation error of \eqref{gaussian:universality} in Kolmogorov distance, which characterises weak convergence and is also practically relevant for statistical applications such as confidence intervals and hypothesis testing.

In the high-dimensional regime, where $d \equiv d(n)$ is allowed to grow in $n$, there has been a surge of interest in establishing universality results for many complicated and non-linear estimators in statistics and machine learning, all on a case-by-case basis. A non-exhaustive list includes properties of high-dimensional covariance matrices \cite{tao2011random, basak2018circular}, the risks of various high-dimensional regressors and classifiers \citep{hu2022universality,han2023universality, dandi2023universality, gerace2024gaussian, montanari2022universality, montanari2023universality}, and representations of data generated by generative adversarial networks \citep{seddik2020random} and student-teacher models \citep{loureiro2021learning,pesce2023gaussian}. 
The practical motivation is that, if \eqref{gaussian:universality} holds, one may study properties of the estimators either through direct simulation on Gaussian data or through many theoretical techniques developed specifically for Gaussians, e.g.~approximate message passing \cite{donoho2009message}, the cavity method \cite{opper2001naive} or the convex Gaussian min-max theorem \cite{gordon1985some,thrampoulidis2014gaussian}.
Despite the popularity and utility of universality results, few results are available on understanding the following question:
\begin{center}
  \emph{When may universality fail for high-dimensional estimators?}
\end{center}
To address this question, one needs to establish tight upper and lower bounds for a class of general functions and understand how the bounds behave in the high-dimensional regime. In this paper, we do so by focusing on the class of 
approximately polynomial functions, where the polynomial degree $m \equiv m(n)$ is also allowed to depend on $n$. This, for example, includes any estimator that can be approximated by a suitable $m$-th order Taylor expansion. 

One aspect of the question is whether known universality approximation rates are optimal. The general recipe for establishing universality results has been developed in \cite{rotar1979limit,mossel2005noise,chatterjee2006generalization,mossel2010noise} based on Lindeberg's principle. Despite the method's generality, 
a naive application
suffers from sub-optimal rates in the cases of empirical averages \citep{chen2011normal} and random matrices \citep{brailovskaya2022universality}, and lead to upper bounds for degree-$m$ multilinear polynomials that require $m \log m = o(\log n)$ \cite{mossel2010noise}. This raises the question whether the rates from Lindeberg's principle are improvable in a high-dimensional setup, especially in terms of how fast $m$ is allowed to grow in $n$. 
 
Another related and crucial question is how fast the dimension $d$ may grow in $n$ and how the bounds may depend on $d$. 
This is well-understood in universality works that either explicitly or implicitly involve the spectral statistics of high-dimensional random matrices in $\R^{n \times d}$ \citep{hastie2022surprises,montanari2022universality}: There, the natural asymptotic is the proportional regime where $d, n \rightarrow \infty$ and $d/n \rightarrow $ some constant,  and we do not focus on this setting. Meanwhile, the condition on the growth of $d$ can become much more subtle, once $f$ is allowed to be a more general statistic.
This has been extensively observed in the case of high-dimensional empirical averages: When $f(X_1, \ldots, X_n)$ is the maximal coordinate of an $d$-dimensional i.i.d.~average, Gaussian universality is known to hold with the dimension growing as fast as $d = O(e^{n^c})$ for some constant $c > 0$ \citep{chernozhukov2013gaussian,fang2021high}. When $f(X_1, \ldots, X_n)$ is the $l_2$-norm of an i.i.d.~average, however, Fang and Koike \citep{fang2024large} show that the condition $d=o(n)$ is not improvable.
On the other hand, if $f(X_1, \ldots, X_n)$ are degree-two U-statistics, a recent line of work \citep{kim2024dimension,huang2023high,gao2025dimension} has shown that distributional approximations may be obtained regardless of how fast $d$ grows. These results form part of a growing body of statistical work on dimension-agnostic inference \citep{chen2025randomized, takatsu2025bridging, kim2025locally} and have important implications for handling data beyond the proportional regime. These results show that much is to be understood about how dimension dependence changes depending on $f$. Crucially, notice that the squared $l_2$-norm of an i.i.d.~average --- considered in the literature on Gaussian approximation in high-dimensional Euclidean balls \citep{sazonov1972bound,bentkus2003dependence,zhilova2020nonclassical,fang2024large} --- is a simple degree-two V-statistic, which classically admits a similar distributional approximation as a degree-two U-statistic and yet seems to have a different dimension dependence from that of U-statistics as $d$ grows. Our results will explain how this discrepancy arises due to how the growth of $d$ interacts with Gaussian universality properties of these statistics as well as the assumptions used in the different lines of work.
\color{black}

\vspace{.5em}

Motivated by these questions, the main results of this paper are: 
\begin{proplist}
  \item A universality result for degree-$m$ multilinear polynomials that upper-bounds the approximation error in \eqref{gaussian:universality} (\Cref{thm:main}). This extends the invariance principle of Mossel et al. \cite{mossel2010noise} to high-dimensional vectors and accommodates dependence across coordinates.
  \item An extension of the upper bound to approximately multilinear polynomials (\Cref{thm:VD}), which covers non-multilinear polynomials such as V-statistics (\Cref{prop:simpler:V}). 
  \item A nearly matching lower bound (\Cref{thm:lower:bound}), which is established for the first time.
\end{proplist}

These results enable us to provide conditions for the validity of Gaussian universality for approximately polynomial estimators in the high-dimensional regime. When universality does hold, our results also reveal new insights on how the behaviours of various high-dimensional estimators may depart from classical theory. Specifically, our main findings are:

\begin{itemize}
  \item \textbf{Polynomial degree dependence}. Both upper and lower bounds on the universality approximation deteriorate as degree $m$ grows, and universality fails when $m = o(\log n)$. This confirms that the requirement $m \log m = o(\log n)$, known from existing results based on Lindeberg's technique, is optimal up to a $\log m$ factor. As special cases, we obtain even-degree U-statistics and V-statistics that admit a Gaussian polynomial approximation error of $\Omega(n^{-1/6m})$ (\Cref{sec:lower:bound}), which answers an open conjecture by \cite{yanushkevichiene2012bounds}.
  \item \textbf{Dimension-agnostic results for U-statistics}.  Our bounds for multilinear polynomials, which include U-statistics as special cases, do not have explicit dependence on $d$ under a mild moment condition. These bounds extend dimension-agnostic distributional approximations for degree-two U-statistics to degree-$m$ U-statistics (\Cref{sec:degree:m:U:stats}). As a by-product, we show that degree-$m$ U-statistics can undergo a phase transition effect as hyperparameters of the U-statistic kernel change, similar to what has been observed for degree-two U-statistics \citep{bhattacharya2022asymptotic,huang2023high,bhattacharya2023fluctuations}.
  This notably implies that a non-degenerate U-statistic can exhibit non-Gaussianity and a degenerate one can exhibit Gaussianity in high dimensions. This has been shown to have important implications on high-dimensional distribution testing \citep{yan2021kernel,huang2023high}, and in the case of a two-sample test, we provide an extension of existing phase transition results for the Maximum Mean Discrepancy (MMD) statistic (\Cref{sec:MMD:imbalance}). On the other hand, we also clarify when the universality approximation can become dimension-sensitive due to violation of the mild moment condition (\Cref{sec:non:multilinear}).
  \item \textbf{Dimension sensitive results for V-statistics}. 
  The universality approximation \eqref{gaussian:universality} with a non-multilinear polynomial $f_n$ on the original data vectors, e.g.~a V-statistic, involves an additional dimension-sensitive term, which arises due to dependence across coordinates (\Cref{sec:non:multilinear}). As an application, these dimension-sensitive bounds extend 
  the high-dimensional confidence sphere results for empirical averages in \citep{sazonov1972bound,bentkus2003dependence,zhilova2020nonclassical,fang2024large} to 
  allow for non-invertible covariance matrices, which are useful for data vectors that are possibly low-ranked or sparse (\Cref{sec:high:d:averages}). 
  Our bounds are dimension-agnostic in special cases, and give the non-improvable condition
  $d=o(n)$ of \cite{fang2024large} 
  in others. 
  \item \textbf{Delta method for high-dimensional data.} As an extension of the U-statistics and V-statistics results, we develop a non-classical delta method for handling high-dimensional data (\Cref{sec:delta:method}). This holds for estimators that can be approximated by an order-$m$ Taylor approximation. Additionally, it also provides distributional approximations for strictly monotonic transformations of these estimators.
  \item \textbf{Asymptotic normality and bootstrap inconsistency}. As our approximating distribution for $f_n(X)$ is a Gaussian polynomial, the fourth moment phenomenon of Nualart and Peccati \citep{nualart2005central} applies, which says that a necessary and sufficient condition for the asymptotic normality of $f_n(X)$ is a vanishing excess kurtosis. This is made precise in \Cref{sec:normality}. As an application, we obtain some simple statistics that demonstrate asymptotic normality but suffer from bootstrap inconsistency (\Cref{sec:bootstrap}). 
\end{itemize} 
We also discuss applicability of our results to non-multilinear polynomials, non-polynomial functions and dependent data in \Cref{appendix:additional:results}.

\subsection{Related work}
\label{sec:related} 

A substantial body of work exists on Gaussian universality and its applications. This section briefly surveys results directly relevant to ours.

\vspace{.5em}

\emph{Lindeberg's principle. } Gaussian universality results are typically proved using Lindeberg's principle. This method was generalised beyond the central limit theorem by Rotar \citep{rotar1979limit},  by Mossel, O’Donnell, and Oleszkiewicz \citep{mossel2005noise,mossel2010noise}, and by Chatterjee 
\citep{chatterjee2006generalization}.   
Compared to distribution-specific techniques, such as Fourier analysis or Stein's method, 
it has the advantage that 
it adapts easily to nonlinear estimators with unknown asymptotic distributions. On the other hand, it is known to result in sub-optimal error rates for a range of specific problems (see the discussion after Theorem 3.3 in \cite{chen2011normal} for empirical averages, and the remark in Section 4 of \citep{brailovskaya2022universality} for random matrices). This sub-optimality is due to too much smoothing: Lindeberg's principle approximates the Kolmogorov metric by an integral probability metric, using test functions with a second derivative that is ${(\nu-2)}$-H\"older for some ${\nu > 2}$. Our lower bound (\cref{thm:lower:bound}) shows this sub-optimality is a necessary price of generality: For the  general class of $n$-dependent probability measures, the bounds obtained by Lindeberg's principle are, in fact, near-optimal. 

\vspace{.5em}

\emph{Invariance principle for general multilinear forms. } 
The use of Lindeberg's principle for multilinear forms goes back to Rotar \citep{rotar1979limit}, who
obtained asymptotic approximations by polynomials of Gaussian for fixed $m$ and $d$. By additionally employing an inequality of Carbery and Wright \citep{carbery2001distributional}, Mossel et al. \citep{mossel2010noise} obtain a finite-sample Kolmogorov-distance bound for univariate data distributions. Recent work also considers convergence in total variation, in both the asymptotic \citep{nourdin2015invariance} and the finite-sample  regime \citep{bally2019total,  kosov2024improved}. 
Our work focuses on the regime with large $m$ and $d$, and our lower bound in Kolmogorov distance directly implies a lower bound in total variation.
Another line of work adapts the multivariate Stein's method to general functions of asymptotically normal random vectors, where the error bounds explicitly sum over the $d$ different coordinates of the random vectors \cite{gaunt2020stein, gaunt2023improved}. These results either explicitly assume $m$ and $d$ to be fixed, or impose implicit requirements on $d$ that may be too strong for applications. Indeed, the main technical work behind our upper bound (\cref{thm:main}) and its applications is to obtain tight dependence on $d$ and $m$.

\vspace{.5em}

\emph{Error bounds for Gaussian universality. } Our lower bound (\cref{thm:lower:bound}) helps to explain similar rates that have been observed in the upper bound for a number of problems. For the problem of non-Gaussian approximation of a degree-two U-statistic with kernel $u$, the known rates depend on the number of non-zero eigenvalues of the Hilbert-Schmidt operator associated with $u$: $O(n^{-1})$ for five non-zero eigenvalues \cite{gotze2014explicit},  $O(n^{-1/12})$ for one non-zero eigenvalue \cite{yanushkevichiene2012bounds}, and $O(n^{-1/14})$ without eigenvalue assumptions \cite{huang2023high}. It is also conjectured in \cite{yanushkevichiene2012bounds} that $O(n^{-1/12})$ is optimal, based on a result by Senatov \cite{senatov1998normal} that the Gaussian approximation of a heavy-tailed multivariate average in a carefully chosen sequence of Euclidean balls is $\Omega(n^{-1/12})$. The proof of our lower bound relies on adapting an asymmetry argument by \cite{senatov1998normal} (see \Cref{sec:construct:lower:bound}
for a sketch of intuition), and implies that for every even degree $m \in \N$ and every $\gamma \in (0, 1]$, there exists a degree-$m$ U-statistic and a degree-$m$ V-statistic of random vectors, whose Gaussian polynomial approximation errors are between $n^{-\gamma / 6m}$ and $ m n^{-\gamma / (6m+2)}$. This proves the conjecture by \cite{yanushkevichiene2012bounds}  in the case $m=2$ and $\gamma=1$.


\vspace{.5em}

\emph{Distribution approximations for high-dimensional statistics. } 
The approximation of specific estimators in high-dimensional regimes is the subject of an extensive literature. Such regimes differ by 
the parameters that grow with $n$. Examples include degree-two U-statistics with $d=d(n)$ \cite{chen2010two, wang2015high, yan2021kernel, gao2023two, huang2023high}, quadratic forms with $n$-dependent weights 
\cite{bhattacharya2022asymptotic}, infinite-order U-statistics with $m=m(n)$ \citep{van1988elementary,song2019approximating}, and statistics with high-dimensional outputs \citep{chernozhukov2013gaussian, chen2018gaussian,song2019approximating}. Our main results assume univariate outputs, but allow general 
dependence of $d$, $m$ and weights of the polynomial on $n$. One key application is therefore degree-$m$ U-statistics of high-dimensional inputs and univariate outputs, for which  
additional references 
are included in \cref{sec:degree:m:U:stats}. Meanwhile, our results do apply to certain cases with high-dimensional outputs, such as the Gaussian approximation of a high-dimensional empirical average (\Cref{sec:high:d:averages}). The key argument is to rewrite the approximation of a multivariate statistic in a given metric as the approximation of a univariate statistic.

\vspace{.5em}

\emph{Terminology: Gaussian universality.} We use the term Gaussian universality to refer to the approximation of a function of $(X_i)_{i \leq n}$ by the same function of Gaussian vectors $(Z_i)_{i \leq n}$; this
matches the nomenclature of most of the literature surveyed above.
Some texts---for example, in the context of Gaussian approximation of Wiener chaos---use the term instead to indicate that
the overall function is asymptotically normal, see e.g.~Chapter 11 of \cite{nourdin2012normal}. Since our approximation $f_n(Z_1, \ldots, Z_n)$ still involves $n$-dependent quantities, it may have Gaussian or non-Gaussian limits (see \Cref{sec:normality}).
 We also note that our results take the form of a penultimate approximation. Other examples of penultimate approximations include chi-squared and shifted-gamma distributions as approximations for the Gaussian CLT limit \cite{hall1983chi,boutsikas2015penultimate} and the Type-III extreme value distribution for extreme value theorem \cite{cohen1982penultimate}. 
 While those penultimate approximations typically yield sharper approximation rates than the final limits (Gaussian or extreme value distributions), our approximations do not.

\section{Main results} \label{sec:main:results}

Throughout, we suppress dependence on $n$, and use the abbreviations
\begin{equation*}
  d\;=\;d(n)
  \qquad\quad
  m\;=\;m(n)
  \qquad\quad
  X\;\coloneqq\;(X_1,\ldots,X_n)\,.
\end{equation*}
The variables ${X_1,\ldots,X_n}$ are always independent (but not necessarily identically distributed) random elements of $\mathbb{R}^d$,
and $p_m$ is a polynomial ${\mathbb{R}^{nd}\rightarrow\mathbb{R}}$ of degree $\leq m$. By an absolute constant, we mean a constant that does
not depend on $n$, $d$, 
$m$, 
$p_m$, or 
the law of $X$. We also denote the collection of Gaussian vectors
\begin{equation*}
  Z\;\coloneqq\;(Z_1,\ldots,Z_n)\qquad\text{ where }\quad Z_1\condind\ldots\condind Z_n\quad\text{ and }\quad Z_i\sim\mathcal{N}(\mean[X_i],\Var[X_i])\;.
\end{equation*}
We occasionally use the tensor notation, which is defined in \Cref{sec:proof:main}. We also use $\| \argdot \|_{l_p}$ to denote the vector $l_p$ norm and use ${\| \argdot \|_{L_\nu} = ( \mean | \argdot |^\nu)^{1/\nu}}$ for the $L_\nu$ norm.
Moreover, $\overset{d}{\rightarrow}$ denotes convergence in distribution and $\overset{\P}{\rightarrow}$ denotes convergence in probability.

\subsection{Universality of multilinear polynomials}  \label{sec:multilinear}

We first consider a multilinear polynomial function $q_m: \R^{nd} \rightarrow \R$ with degree $\leq m$. By multilinearity, we mean that
\begin{align*}
    \partial_i \, q_m(x_1, \ldots, x_n) \text{ does not depend on } x_i 
    \quad 
    \text{ for all } 1 \leq i \leq n 
    \text{ \;\; and \;\; for }
    x_1, \ldots, x_n \in \R^d
    \;.
\end{align*}
Here and throughout, $\partial_i$ is the partial differential operator with respect to $x_i$ or the $i$-th $\R^d$ vector input.
 When viewed as a function of $nd$ univariate inputs, this is exactly the class of functions considered by the universality results of Mossel et al.~\citep{mossel2005noise,mossel2010noise}. A crucial difference is that while they consider i.i.d.~univariate inputs, we study independent high-dimensional inputs, which allows for arbitrary dependence across the coordinates of each $\R^d$ input vector. This dependence plays a crucial role in our subsequent analyses.

Our first result concerns the approximation of $q_m(X)$ by $q_m(Z)$ in Kolmogorov distance, and can be viewed as a high-dimensional generalisation of \citep{mossel2010noise}. Similar to \citep{mossel2010noise}, the bound is given in terms of the moment terms
\begin{align*}
  \sigma \;\coloneqq&\; \sqrt{\Var\,q_m(X)}\;
  \;&\text{ and }&&\;
  M_{\nu;i} \;\coloneqq&\;  \| \partial_i q_m(W_i)^\top ( X_i - \mean[X_i]) \|_{L_\nu}\;,
  \tagaligneq \label{eq:defn:moments} 
\end{align*}
where $W_i \coloneqq (X_1, \ldots, X_{i-1}, 0, Z_{i+1}, \ldots, Z_n) \in \R^{nd}$. $M_{\nu;i}$ measures the influence of the $i$-th centred data vector on $f$ in the $L_\nu$ norm, which extends the scalar version used in \citep{mossel2010noise}.

\begin{theorem}[Universality for multilinear polynomials]
  \label{thm:main}
  Fix $\nu \in (2,3]$. Then there exists some absolute constant $C > 0$ such that 
\begin{align*}
    \Big|
        \P\Big(  \mfrac{q_m(X) - \mean[q_m(X)]}{\sigma} \leq t\Big) 
        - 
        \P\Big(  \mfrac{q_m(Z) - \mean[q_m(X)]}{\sigma} \leq t\Big) 
    \Big|
    \;\leq\;
    C
    m
    \,
    \Big( \mfrac{ \sum_{i=1}^n M_{\nu;i}^\nu }
    { (1+t^2)^{\nu/2} \, \sigma^\nu
    }
    \Big)^{\frac{1}{\nu m +1}}
    \;. 
\end{align*}
for every $n, m, d \in \N$, every $t \in \R$, and every $\sigma > 0$.
Moreover, we have $ \mean[q_m(X)] = \mean[q_m(Z)]$ and $\Var[q_m(X)]= \Var[q_m(Z)]$. 
\end{theorem}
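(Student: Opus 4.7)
The approach combines Lindeberg's telescoping with a smoothing argument: approximate the indicator $\mathbf{1}\{\cdot \leq t\}$ by a smooth test function $\phi_{t,\epsilon}$ of bandwidth $\epsilon$, bound the smoothing error by an anti-concentration probability for $q_m(\Xi)/\sigma$ near $t$, bound the main smooth term via Lindeberg's principle, and finally optimise in $\epsilon$. The two structural ingredients to exploit are (a) the multilinearity of $q_m$ in $(\bX_1, \ldots, \bX_n)$, which forces the Lindeberg Taylor expansion to take a particularly clean form, and (b) the fact that $q_m(\Xi)$ is a polynomial of Gaussians of degree at most $m$, to which a Carbery-Wright-type anti-concentration yields a smoothing-error bound of order $\epsilon^{1/m}$. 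To recover the non-uniform factor $(1+t^2)^{-\nu/2}$ I would adapt Senatov's asymmetric choice of $\phi_{t,\epsilon}$ in place of a symmetric smoothing.

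For the Lindeberg step, I would write
\[
\mean\bigl[\phi_{t,\epsilon}(q_m(\bX)/\sigma) - \phi_{t,\epsilon}(q_m(\Xi)/\sigma)\bigr] = \sum_{i=1}^n \mean\bigl[G_i(\bX_i) - G_i(\xi_i)\bigr],
\]
with $G_i(u) := \phi_{t,\epsilon}(q_m(\bX_1, \ldots, \bX_{i-1}, u, \xi_{i+1}, \ldots, \xi_n)/\sigma)$. Multilinearity of $q_m$ gives the key simplification $G_i(u) = \phi_{t,\epsilon}((\partial_i q_m(\bW_i)^\top u + q_m(\bW_i))/\sigma)$, so the Taylor expansion of $G_i$ in $u$ about $0$ has coefficients depending only on $\bW_i$. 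After conditioning on $\bW_i$ (which is independent of $\bX_i$ and $\xi_i$), the zeroth-, first-, and second-order Taylor terms in $\mean[G_i(\bX_i)-G_i(\xi_i)]$ cancel because $\bX_i$ and $\xi_i$ share first and second moments by construction. Only an order-$\nu$ H\"older remainder survives, and H\"older interpolation between the second and third derivatives of $\phi_{t,\epsilon}$, combined with Senatov's asymmetric construction, should yield a per-coordinate bound of $\lesssim \epsilon^{-\nu}(1+t^2)^{-\nu/2}\,|\partial_i q_m(\bW_i)^\top \bX_i|^\nu/\sigma^\nu$. Taking expectations produces $M_{\nu;i}^\nu$; the analogous Gaussian-side moment is controlled by the same $M_{\nu;i}^\nu$ via the $L_\nu$-$L_2$ equivalence for Gaussians together with variance matching.

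Summing over $i$ and balancing the total Lindeberg error $\epsilon^{-\nu}(1+t^2)^{-\nu/2}\sum_i M_{\nu;i}^\nu/\sigma^\nu$ against the smoothing error $\epsilon^{1/m}$ gives the optimal bandwidth $\epsilon^\star \sim ((1+t^2)^{-\nu/2}\sum_i M_{\nu;i}^\nu/\sigma^\nu)^{m/(\nu m + 1)}$, which produces the exponent $1/(\nu m + 1)$ and the claimed rate. The identities $\mean[q_m(\bX)] = \mean[q_m(\Xi)] = 0$ and $\Var[q_m(\bX)] = \Var[q_m(\Xi)]$ are immediate from the centring built into the definition of $\bX_i$, the multilinearity of $q_m$, and the matching of the first two moments of $\bX_i$ and $\xi_i$. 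I expect the principal technical difficulty to lie in carrying out Senatov's asymmetric construction so that the factor $(1+t^2)^{-\nu/2}$ emerges with exactly the right exponent; a secondary obstacle is establishing the Carbery-Wright-type anti-concentration $\lesssim \epsilon^{1/m}$ for the degree-$m$ polynomial $q_m(\Xi)/\sigma$ in the present non-identically-distributed, high-dimensional setting, and propagating it cleanly through the bandwidth optimisation.
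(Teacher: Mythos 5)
Your overall architecture — smooth the indicator, split the error into an anti-concentration term plus a Lindeberg telescoping term, exploit multilinearity so that only the order-$\nu$ H\"older remainder survives, control the Gaussian-side moment via the $L_\nu$–$L_2$ equivalence for Gaussians, then balance the bandwidth — matches the paper's proof exactly. However, there is a genuine misconception in how you propose to produce the non-uniform factor $(1+t^2)^{-\nu/2}$, and as written your plan would not go through.

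You propose to inject the $(1+t^2)^{-\nu/2}$ decay into the \emph{Lindeberg} remainder via a Senatov-style asymmetric smoother $\phi_{t,\epsilon}$. This is not where it comes from, and it is not clear that it could. The Lindeberg remainder involves the H\"older modulus of $\phi_{t,\epsilon}''$ evaluated at the random argument $q_m(\bW_i)/\sigma + \theta\,\partial_i q_m(\bW_i)^\top\bX_i/\sigma$, which lives wherever the data put it, not near $t$; a location-dependent decay in the derivatives of $\phi_{t,\epsilon}$ does not turn into a factor $(1+t^2)^{-\nu/2}$ after integrating against the law of $\bW_i$. In the paper's proof the Lindeberg step gives simply $\delta^{-\nu}\sum_i M_{\nu;i}^\nu/\sigma^\nu$ with no $t$-dependence (the smoother is a symmetric bump with $|h^{(r)}|\leq\delta^{-r}$), and the $(1+t^2)$ enters entirely through the \emph{anti-concentration} step: since $\tilde q_m(\Xi)$ has mean zero and unit variance, the shifted variable $\tilde q_m(\Xi)-t$ has second moment exactly $1+t^2$, and Carbery--Wright applied to $\tilde q_m(\Xi)-t$ gives $\P(|\tilde q_m(\Xi)-t|<\delta)\leq C\delta^{1/m}(1+t^2)^{-1/2m}$. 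Balancing this against the $\delta^{-\nu}$ term then produces $\big((1+t^2)^{-\nu/2}\sigma^{-\nu}\sum_i M_{\nu;i}^\nu\big)^{1/(\nu m+1)}$. Note that Senatov's asymmetrisation is a tool the paper uses only for the \emph{lower} bound (Theorem 3), not for Theorem 1; you have misattributed its role. Finally, your stated worry about establishing Carbery--Wright in the ``non-identically-distributed, high-dimensional setting'' is a non-issue: $q_m(\Xi)/\sigma - t$ is a degree-$m$ polynomial of the underlying standard Gaussian vector, and the inequality of Carbery and Wright applies to any such polynomial in any dimension, independently of the covariance structure one started from.
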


\Cref{thm:main} is proved by adapting the Lindeberg's method in \citep{mossel2010noise} for multivariate vectors and applying the Carbery-Wright anti-concentration inequality of \citep{carbery2001distributional}. The next example illustrates the upper bound in the simple case of an empirical average.

\begin{example}[Empirical average] \label{example:average} Let $m=d=1$, $q_m(x_1,\ldots, x_n) = \frac{1}{\sqrt{n}}\sum_{i \leq n} x_i$ and $X_1, \ldots, X_n$ be i.i.d.~univariate variables with mean $0$ and variance $1$. Each influence term can be computed as $M_{\nu,i} = \frac{1}{\sqrt{n}} \| X_i \|_{L_\nu}$, and \Cref{thm:main} becomes a Berry-Ess\'een type bound
\begin{align*}
    \Big| \P\Big( \mfrac{1}{\sqrt{n}} \msum_{i \leq n} X_i \leq t\Big) - \P\Big(  \mfrac{1}{\sqrt{n}} \msum_{i \leq n} Z_i \leq t \Big) \Big|
    \;\leq\;
    C \,
    \Big( \mfrac{  \mean | X_1 |^\nu }{ n^{(\nu - 2) / 2}}
    \Big)^{\frac{1}{\nu +1}}
    \;. 
\end{align*}
Assuming $\nu=3$, i.e.~the existence of a third moment, the error becomes $( \mean | X_i |^3 / \sqrt{n} \, )^{1/4}$, which yields a sub-optimal rate. As discussed in \Cref{sec:related}, this sub-optimality is a known issue with Lindeberg's method when applied to empirical averages. We will show in \Cref{sec:lower:bound} that for general polynomials of high-dimensional vectors, our \Cref{thm:main} and therefore Lindeberg's method can in fact yield nearly optimal rates. On the other hand, \Cref{thm:main} applies also to a weighted average of non-i.i.d.~data. In that case, requiring the upper bound to vanish is equivalent to a sufficient condition for Lindeberg's condition for CLT.
\end{example}

For degree-$m$ multilinear polynomials, we make the following observations:
\begin{proplist}
    \item {\em A heuristic for the rate. } To illustrate the upper bound further, it is instructive to have a heuristic for the error rate. Following \Cref{example:average}, a useful heuristic is to consider polynomials for which $\sigma$ is normalised to be $\Theta(1)$, assume third moment existence ($\nu=3$), and suppose a ``typical'' influence term to be on the order $M_{3,i} = \Theta(n^{-1/2})$. In this case, the upper bound of \Cref{thm:main} is on the order
    \begin{align*}
         O\big( \, m \, \big( n^{-1/2} \big)^{1/(3m+1)} \, \big) \;=\; O\big( m n^{ - 1/(6m+2)} \big) \;.
         \tagaligneq \label{eq:main:heuristic}
    \end{align*}
    In \Cref{sec:degree:m:U:stats}, we show that under mild conditions, \eqref{eq:main:heuristic} holds for all degree-$m$ U-statistics, which covers all degree-$m$ symmetric multilinear polynomials. 
    \item {\em Degree dependence. } Under the heuristic \eqref{eq:main:heuristic}, the upper bound of \Cref{thm:main} is meaningful only when $m \log m = o( \log n)$. More generally, we require the condition $m \log m = o( \log n)$ as long as $\sum_{i \leq n} M^\nu_{\nu;i} / \sigma^\nu $ decays as $O(n^{-\alpha})$ for some $\alpha > 0$.
    \item {\em Dimension dependence. } The bound in \Cref{thm:main} depends on the dimension $d$ only implicitly via the moment terms defined in \eqref{eq:defn:moments}. For example, if we replace $X_i = v^\top X'_i$ in \Cref{example:average} where $v \in \R^d$ is fixed and $X'_i$'s are $\R^d$ random vectors, then $d$-dependence only shows up implicitly through moments of $v^\top X'_i$. This is key to obtaining dimension-agnostic results for U-statistics in \citep{kim2024dimension,huang2023high} and in our \Cref{sec:degree:m:U:stats}.
\end{proplist}

\noindent
\Cref{thm:main} also applies to a not-necessarily-multilinear polynomial function $p_m: \R^{nd} \rightarrow \R$, by reparameterising the estimator $p_m(X)$ as a multilinear polynomial function of some larger data vectors, which includes information about $X_i$ up to its $m$-th power; see \Cref{appendix:non:multilinear}. \Cref{appendix:extension} also discusses extensions to non-polynomial functions and dependent data.

\subsection{Approximately polynomial functions} \label{sec:approx:poly}

Most statistical estimators are only polynomial in some approximate sense, such as U-statistics, V-statistics and any estimator that is well-approximated by some Taylor series. The next result extends \Cref{thm:main} to these statistics. Below, $f: \R^{nd} \rightarrow \R$ is a generic function to be approximated by the multilinear polynomial function $q_m$, and we inherit the notations $\sigma = \sqrt{\Var \, q_m(X)}$ and $M_{\nu;i}$ from \Cref{thm:main}. 

\begin{theorem}[Approximate polynomials]
  \label{thm:VD}
  Fix $\nu \in (2,3]$. There is an absolute constant $C > 0$ such that, for every $n, m, d \in \N$ and $\sigma > 0$, and for every measurable function $f: \R^{nd} \rightarrow \R$,
    \begin{align*}
        \msup_{t \in \R}
        \big| \P\big( \sigma^{-1} f(X) \leq t\big) - \P\big(  &  \sigma^{-1} q_m(Z) \leq t\big) \big|
        \\
        \;\leq&\;
        C 
        m 
        \Big( 
        \Big(  \mfrac{\| f(X) - q_m(X) \|_{L_2}}{\sigma} \Big)^{\frac{2}{2m+1}}
        +
        \Big( \mfrac{ \sum_{i=1}^n M_{\nu;i}^\nu }
        { \sigma^\nu
        }
        \Big)^{\frac{1}{\nu m +1}}
        \Big)\;.
    \end{align*}
\end{theorem}

\Cref{thm:VD} shows that to obtain a degree-$m$ Gaussian polynomial approximation, it suffices for $f(X)$ to be well-approximated by $q_m(X)$ in the $L_2$ norm. This can be useful since $f$ needs not be a differentiable function. Moreover, provided that $\mean[f(X)]=0$, the additional approximation error takes the form of a variance ratio 
\begin{align*}
    \big( \Var[f(X) - q_m(X)] \big) \,\big/ \, \Var[q_m(X)] \;.
    \tagaligneq \label{eq:VD:simple}
\end{align*}
In other words, $f(X)$ can be approximated by a Gaussian polynomial if we can extract a polynomial component of $f(X)$ that ``dominates in variance''. We refer to this property as \emph{variance domination}. Note that \eqref{eq:VD:simple} is with respect to the original data $X$ and not $Z$.

\vspace{.5em}

The next example illustrates how variance domination already plays a role in understanding the classical and high-dimensional limits of a simple U-statistic. We only sketch the arguments here, and defer a formal treatment of general degree-$m$ U-statistics to \Cref{sec:degree:m:U:stats}.

\begin{example}[A simple U-statistic] \label{example:simple:U} Consider $f(X) = u_2(X) = \frac{1}{n(n-1)} \sum_{i \neq j} X_i^\top X_j$ and let $X_i$'s be i.i.d.~$\R^d$-valued random vectors. This is exactly a multilinear polynomial, and \Cref{thm:main} allows us to take $X_i$ as Gaussians under mild moment conditions. Now let $\mu \coloneqq \mean[X_1]$, $\Sigma \coloneqq \Var[X_1]$, $\bar X_i \coloneqq X_i - \mu$ and consider the decomposition 
\begin{align*}
    u_2(X)
    \;=&\; \mfrac{1}{n(n-1)} \msum_{i \neq j} (\bar X_i + \mu)^\top (\bar X_j + \mu) 
    \\
    \;=&\;
    \mu^\top \mu  
    \,+\, 
    \mfrac{2}{n} \msum_{i=1}^n \mu^\top \bar X_i \,+\, \mfrac{1}{n(n-1)} \msum_{i \neq j} \bar X_i^\top \bar X_j
    \;\eqqcolon\; \mu^\top \mu + q_1(X) + q_2(X)
    \;.
\end{align*}
$\mu^\top \mu$ is the population quantity that $u_2(X)$ estimates, whereas $q_1(X)$ and $q_2(X)$ are two stochastic polynomial components of $u_2(X)$. Their variances can be computed as 
\begin{align*}
    \Var[q_1(X)] \;=&\; O\big( n^{-1} \mu^\top \Sigma \mu \big)
    &\text{ and }&&
    \Var[q_2(X)] \;=&\; O\big( n^{-2} \Tr( \Sigma^2 ) \big) \;.
\end{align*}
For a fixed $d$, the classical asymptotic theory for U-statistics says that a non-degenerate U-statistic admits a Gaussian limit, whereas a degenerate one admits a sum-of-centred-and-weighted chi-squares limit \citep{serfling1980approximation}. In the simple case of $u_2(X)$ above, non-degeneracy reduces to $\mu \neq 0$, and the two possible limits of $u_2(X)$ are described exactly by variance domination of \Cref{thm:VD}: When $\mu \neq 0$, $\Var[q_1(X)] \gg \Var[q_2(X)] = \Var[u_2(X) - q_1(X)]$, and the approximation by the degree-one polynomial $q_1(Z)$ prevails; when $\mu = 0$, $\Var[q_1(X)] = 0 \ll \Var[q_2(X)]$, and the approximation by the degree-two polynomial $q_2(Z)$ prevails.
\end{example}
When $d$ is allowed to grow in $n$, 
a line of work \citep{bhattacharya2022asymptotic,huang2023high} has shown theoretically and empirically that $\mu=0$ is no longer the correct condition for degeneracy. Indeed if $X_i \sim \cN(\mu, I_d)$ with $\| \mu \|_{l_2}=1$, the variances compute as 
\begin{align*}
    \Var[q_1(X)] \;=&\; O(n^{-1})
    &\text{ and }&&
    \Var[q_2(X)] \;=&\; O(d n^{-2})\;.
    \tagaligneq \label{eq:simple:U:var:high:d}
\end{align*}
Even when $\mu \neq 0$, the variance of $q_2(X)$ may still be non-negligible if $d = \Omega(n)$. A practical consequence is that non-degenerate U-statistics can exhibit degeneracy in high dimensions, and vice versa. We call this a \emph{phase transition} effect, since for general U-statistics, the relative scaling of the two variances is affected by how hyperparameters of the U-statistic are chosen to scale with $d$, as demonstrated by \citep{huang2023high} for some one-sample and two-sample tests. However, those works are restricted to degree-two U-statistics, whereas our \Cref{thm:VD} can be applied to show that this is a more general phenomenon for approximately polynomial estimators.

The validity of applying variance domination to compare \eqref{eq:simple:U:var:high:d} hinges on the requirement that the bounds in \Cref{thm:VD} are dimension-agnostic, which allows for studying regimes where $d \ll n$, $d \asymp n$ and $d \gg n$. Indeed, the dimension-agnostic result of \citep{huang2023high} is a special case of our \Cref{thm:VD}. In \Cref{sec:degree:m:U:stats}, we show that the dimension-agnostic aspect of our bounds extends to degree-$m$ U-statistics whose degree does not grow too fast in $n$ and under a mild moment condition. On the other hand, dimension-dependence does arise when the moment condition is violated. This is discussed next in  \Cref{sec:non:multilinear}, which also gives another example of distributional simplification by variance domination.


\subsection{Universality of symmetric non-multilinear polynomials
and a discussion on dimension dependence
} \label{sec:non:multilinear}

In this section, we consider universality results for degree-$m$ V-statistics with polynomial summands: For $x_1, \ldots, x_n \in \R^d$, we define 
\begin{align*}
    v_m(x_1, \ldots, x_n) 
    \;\coloneqq\; 
    \mfrac{1}{n^m} \msum_{i_1, \ldots, i_m \leq n} \langle S ,\, x_{i_1} \otimes \ldots \otimes x_{i_m} \rangle\;.
    \tagaligneq \label{eq:simple:v:defn}
\end{align*}
where $S \in \R^{d^m}$ is a deterministic tensor, $\otimes$ indicates the tensor product (see \Cref{sec:proof:main} for definition), and $\langle \argdot, \argdot \rangle$ denotes the Euclidean inner product in $\R^{d^m}$. $v_m$ is a prototypical example of a symmetric non-multilinear polynomial. We focus on $v_m$ for simplicity, although we expect the same argument to apply to more general classes of nonlinear polynomials.
We impose the following simplifying assumptions:

\begin{assumption}[Simplification of the V-statistic] \label{assumption:V:stat} (i) $S$ is a symmetric tensor in the sense that $S_{l_1 \ldots l_m} = S_{l_{\pi(1)} \ldots l_{\pi(m)}}$ for all $l_1, \ldots, l_m \leq d$ and for all permutations $\pi$ acting on $\{1, \ldots, m\}$.  (ii) $X_1, \ldots, X_n$ are i.i.d.~zero-mean random vectors.
\end{assumption}

\begin{assumption}[Uniformly sub-Weibull entries] \label{assumption:sub:Weibull} There exist some absolute constants $\theta > 0$ and $K_1 > 0$ such that, for all $l \leq d$ and $t \geq 0$,
    \begin{align*}
        \P( | X_{1l} - \mean[X_{1l}] | \geq t ) \;\leq\; 2 \exp( - (t/K_1)^{1/\theta})
        \;.
    \end{align*}
\end{assumption}

\begin{remark} \Cref{assumption:sub:Weibull} gives a sub-Gaussian tail when $\theta=\frac{1}{2}$ and a sub-exponential tail when $\theta = 1$, but also accommodates slower tails. It is used such that $\mean[X_{1l}^m]$ grows no faster than $O(m^{\theta m})$ \cite{vladimirova2020sub} and therefore $o(n^{-\alpha})$ for any $\alpha > 0$ under the condition $m \log m = o(\log n)$. 
    \Cref{assumption:sub:Weibull} can be relaxed at the cost of more tedious notation (\Cref{sec:proof:v:additional}). 
\vspace{1em}

The next result gives the universality approximation bound for $v_m(X)$. 
\end{remark}

\begin{proposition} \label{prop:simpler:V} Let $2m^2 < n$. Under Assumptions \ref{assumption:V:stat} and \ref{assumption:sub:Weibull} and the additional assumptions that $\| S \|_{l_1} = \Theta(1)$ and $m \log m = o(n)$, there is some absolute constant $C > 0$ such that 
    \begin{align*}
        &\;
        \sup_{t \in \R} \big| \P( v_m(X) \leq t) - \P( v_m(Z) \leq t) \big|
        \\
        &\,\;\leq\;\,
        C m 
            n^{-\frac{\nu-2}{2\nu m + 2}}
            \,
            \bigg( 
                \mfrac{
                    \big\| \langle S, X_1 \otimes \ldots \otimes X_m \rangle \big\|_{L_\nu}^2 
                    + O(m^{m(4+2\theta)} n^{-1} )
                }
                {
                        \| \langle S, X_1 \otimes \ldots \otimes X_m \rangle \|_{L_2}^2 
                        + O(m^{m(4+2\theta)} n^{-1} )
                }
            \bigg)^{\frac{\nu}{2 \nu m + 2}}
            \tagaligneq \label{eq:simpler:V:intermediate:one}
        \\
        &\hspace{2em} 
                +
                C m^{3 + \theta} \,  n^{ - \frac{1}{2 m + 1 } } \,
                \Big( 
                \mfrac{ 
                    \| S \|_{l_1}^2
                }{
                        \| \langle S, X_1 \otimes \ldots \otimes X_m \rangle \|_{L_2}^2 
                        + O(m^{m(4+2\theta)} n^{-1} )
                } 
                \Big)^{\frac{1}{2m+1}}
            \tagaligneq \label{eq:simpler:V:intermediate:two}
        \\
        & \,\;\leq\; \, 
        2 C m^{3 + \theta} n^{- \frac{\nu-2}{2\nu m + 2}}
        \,
        \bigg( 
                \mfrac{
                    \| S \|_{l_1}^2 
                    +
                    O(n^{-1})
                }
                {
                    \| \langle S, X_1 \otimes \ldots \otimes X_m \rangle \|_{L_2}^2 
                        + O(m^{m(4+2\theta)} n^{-1} )
                }
            \bigg)^{\frac{\nu}{2 \nu m + 2}}
        \;,
    \end{align*}
    where the bounding constants in $\Theta(\argdot)$, $O(\argdot)$ and $o(\argdot)$ are independent of $n$, $m$ and $d$.
\end{proposition}

\Cref{prop:simpler:V} provides two bounds. The second is stated for simplicity, whereas the first demonstrates how the errors arise from applying \Cref{thm:main} and \Cref{thm:VD}: \vspace{-.5em}
\begin{itemize}
    \item \eqref{eq:simpler:V:intermediate:one} is the Gaussian polynomial approximation term from applying \Cref{thm:main} to the reparameterised polynomial of augmented vectors (\Cref{appendix:non:multilinear}), which replaces each augmented vector $\bX_i=\tvec( (X_i - \mean[X_i], \ldots, X_i^{\otimes m}-\mean[X_i^{\otimes m}]))$ by a Gaussian $\xi_i$. For a multilinear polynomial, e.g.~a U-statistic, which does not involve $X_i^{\otimes 2}, \ldots, X_i^{\otimes m}$, the replacement by $\xi_i$ suffices for establishing universality with respect to $X_i$, and \eqref{eq:simpler:V:intermediate:one} is the only approximation error required. Indeed, if the 2nd and 3rd absolute moments of $\langle S, X_1 \otimes \ldots \otimes X_m \rangle$ are of the same order, the error in \eqref{eq:simpler:V:intermediate:one} scales as $O(m n^{-\frac{1}{6m+2}})$, which matches the heuristic \eqref{eq:main:heuristic}.
    \vspace{.5em}
    \item \eqref{eq:simpler:V:intermediate:two} is the error term from applying \Cref{thm:VD} to approximate $\xi_i$'s by $\tvec((Z_i, \ldots, Z_i^{\otimes m}))$. This requires replacing e.g.~$\cN(\mean[X_i^{\otimes m}], \Var[X_i^{\otimes m}])$ by $\cN(\mean[X_i], \Var[X_i])^{\otimes m}$, and in the proof, we have invoked \Cref{assumption:sub:Weibull} to ensure the error incurred is small. Under the condition $\| S \|_{l_1} = \Theta(1)$, an additional dimension dependence can arise if $\| \langle S, X_1 \otimes \ldots \otimes X_m \rangle \|_{L_2}^2 = o(1)$ as $d$ grows. We illustrate this in the next example. 
\end{itemize}
\vspace{-1em}

\begin{example}[A simple V-statistic] \label{example:simple:V} Consider the V-statistic analogue of the U-statistic $u_2(X)$ considered in \Cref{example:simple:U}, namely $v_2(X) = \frac{1}{n^2} \sum_{i,j \leq n} X_i^\top X_j$, and again let $X_i$'s be i.i.d.~$\R^d$-valued random vectors. 
We investigate how the additional error term \eqref{eq:simpler:V:intermediate:two} behaves as a function of dimension $d$. 
 To see this, we first renormalise the V-statistic as 
\begin{align*} 
    \mfrac{1}{d} \, v_2(X)
    \;=\;
    \mfrac{1}{n^2}  \msum_{i,j \leq n} (X_i^\top X_j)/d 
    \;=\;
    \mfrac{1}{n^2}  \msum_{i,j \leq n}
    \, \langle S, X_i \otimes X_j \rangle\;,
\end{align*}
where $S = \frac{1}{d} I_d$ satisfies that $\| S \|_{l_1} = 1$. Take $\nu=3$, $\mean[X_1] = 0$ and $\Var[X_1] = \Sigma$ for simplicity, in which case we are focusing on a degenerate V-statistic. The error term \eqref{eq:simpler:V:intermediate:one} is the same as that for the U-statistic in \Cref{example:simple:U}, and is dimension-agnostic if $\| X_1^\top X_2 \|_{L_3} / \| X_1^\top X_2 \|_{L_2} = \Theta(1)$. Meanwhile, since $m=2$, \eqref{eq:simpler:V:intermediate:two} reduces to
\begin{align*}
    \Theta\Big( n^{-\frac{1}{5}} \Big( \mfrac{1}{ \| \frac{1}{d} X_1^\top X_2 \|_{L_2} + o(1) } \Big)^{\frac{1}{5}} \Big)
    \;=&\;
    \Theta \Big( n^{-\frac{1}{5}} \Big( \mfrac{1}{d^{-2}  \, \Tr( \Sigma^2 )  + o(1) } \Big)^{\frac{1}{5}}  \Big)
    \;.
    \tagaligneq \label{eq:V:L2:calc}
\end{align*}
Assume the first coordinate of $X_1$ satisfies $\Var[X_{11}] = \Theta(1)$ for simplicity. The dimension dependence of \eqref{eq:simpler:V:intermediate:two} is now determined by the correlations of the different coordinates of $X_1$. To illustrate this, consider the following cases:
\begin{proplist}
    \item \textbf{Identical coordinates $\Rightarrow$ dimension-agnostic bounds.} When $X_1$ has identical coordinates, $\Tr(\Sigma^2) = \Theta(d^2)$, \eqref{eq:simpler:V:intermediate:two} is dimension-agnostic, and by the second bound of \Cref{prop:simpler:V}, the universality approximation holds regardless of how fast $d$ grows.
    \item \textbf{Shared signal variable across all coordinates $\Rightarrow$ dimension-agnostic bounds. } Suppose each coordinate of $X_1$ is generated by the model $X_{ij}= \bar X_i + \epsilon_{ij}$, where $\bar X_i$'s are i.i.d.~real-valued random variables with $\Var[\bar X_i] = \Theta(1)$ and $\epsilon_{ij}$'s are some noise variables independent of $\bar X_i$'s. In this case, we have  $\Tr(\Sigma^2) = \Omega(d^2)$ and again obtain the same conclusion. That is to say, if there is some common signal variable that persists across all coordinates of the data, we expect the universality approximation to be dimension-agnostic.
    \item \textbf{Uncorrelated coordinates $\Rightarrow$ require $d=o(n)$.} Assume the coordinates of $X_1$ are identically distributed for simplicity. If the coordinates are uncorrelated, then $\Sigma$ is a diagonal matrix and $\Tr(\Sigma^2) = \Theta(d)$. This introduces a dimension dependence in \eqref{eq:simpler:V:intermediate:two}, and for  \eqref{eq:simpler:V:intermediate:two} to decay to zero, we require $d = o(n)$.
    \item  \textbf{Sparse coordinates $\Rightarrow$ possibly worse requirement on $d$.} Suppose that the $k$ coordinates of $X_1$ are i.i.d.~non-degenerate, where $k=\Theta(1)$, and the remaining coordinates of $X_1$ are vanishingly small such that $\Tr(\Sigma^2) = \Theta(1)$. Then $d=o(n^{1/2})$ is required for \eqref{eq:simpler:V:intermediate:two} to vanish. Note that in this case, one can find simple distributions under which $\| X_1^\top X_2 \|_{L_3} / \| X_1^\top X_3 \|_{L_2} = \Theta(1)$, and thus the error \eqref{eq:simpler:V:intermediate:one} remains dimension-agnostic. Note also that if the $d-k$ small coordinates of $X_1$ are exactly zero, we can explicitly incorporate this into the coefficient tensor $S$ in \Cref{prop:simpler:V} by setting appropriate entries to zero (while renormalising $S$ s.t. $\| S \|_{l_1} = \Theta(1)$), which allows \eqref{eq:simpler:V:intermediate:two} to remain dimension-agnostic. Therefore, a ``bad'' case for \eqref{eq:simpler:V:intermediate:two} is when the sparsity is unknown or approximate. \vspace{-1em}
    \item \textbf{Independent coordinates $\Rightarrow$ dimension-agnostic bounds. } 
    If the coordinates of $X_1$ are additionally also independent, we can instead view $v_2(X)$ as a polynomial of $nd$ i.i.d.~univariate random variables, and apply \Cref{thm:main} and \Cref{thm:VD} to these $nd$ variables. This allows us to obtain dimension-agnostic bounds (\Cref{appendix:v:iid:coord}).
\end{proplist}
\end{example}

\vspace{-.5em}

\Cref{example:simple:V} suggests that one ``bad'' case in which \eqref{eq:simpler:V:intermediate:two} becomes dimension-sensitive is when the coordinates of $X_1$ are uncorrelated but still dependent, and we do not have access to additional information about the dependence structure such as (i) or (ii) to obtain a better control. It turns out that this is also a case where the multilinear universality error \eqref{eq:simpler:V:intermediate:one} can become dimension-sensitive, as illustrated in the next example.

\begin{example}[\textbf{Uncorrelated but arbitrarily dependent coordinates $\Rightarrow$ dimension-sensitive bounds for both U-statistics and V-statistics}] \label{example:break:moment:cond} Consider the U-statistic $u_2(X)$ in \Cref{example:simple:U} and the V-statistic $v_2(X)$ in \Cref{example:simple:V}. Taking $\nu = 3$, the universality approximation error \eqref{eq:simpler:V:intermediate:one} of both statistics is on the order
\begin{align*}
    &\;
    n^{-1/14} \bigg( \mfrac{\| \frac{1}{d} X_1^\top X_2 \|_{L_3}^2 + O(n^{-1})}{\| \frac{1}{d} X_1^\top X_2 \|_{L_2}^2 + O(n^{-1})} \bigg)^{3/14}
    \;\leq\;
    n^{-\frac{1}{14}} \bigg( \mfrac{\| \frac{1}{d} X_1^\top X_2 \|_{L_4}^2 + O(n^{-1})}{\| \frac{1}{d} X_1^\top X_2 \|_{L_2}^2 + O(n^{-1})} \bigg)^{3/14}
    \\
    &\hspace{5em}\;=\;
    n^{-1/14}
    \bigg(
    \mfrac{
        \big( 
            \,
            d^{-4}
            \,
            \sum_{l_1, l_2, l_3, l_4 \leq d}
            \,
            \mean\big[  
                X_{1l_1} X_{1l_2} X_{1l_3}  X_{1l_4}
            \big]^2 \, 
        \big)^{1/2}
        +
        O(n^{-1}) 
    }
    { 
            d^{-2}
            \,
           \sum_{l_1, l_2 \leq d}
           \,
            \mean\big[  
                X_{1l_1} X_{1l_2} 
            \big]^2 \,
        +
        O(n^{-1}) 
    }
    \bigg)^{3/14}
    \;.
    \tagaligneq \label{eq:break:moment:cond}
\end{align*}
Under case (iii) in \Cref{example:simple:V}, where the coordinates of $X_1$ are identically distributed but uncorrelated, the above bound is on the order 
\begin{align*}
    \bigg( 
        n^{-\frac{1}{3}}  
        \,
        \times 
        \,
    \mfrac{
        \,
        \mean[X_{11} X_{12} X_{13} X_{14}]
        +
        O(d^{-1/2} + n^{-1})
        \,
    }
    { 
        d^{-1}
        \,
        \mean\big[  
            X_{11}^2
        \big]^2 \,
        +
        O( n^{-1} )
    }
    \bigg)^{3/14}
    \;.
\end{align*}
Therefore, if the dependence across coordinates is such that $\mean[X_{11} X_{12} X_{13} X_{14}] = \Theta(1)$, this introduces the requirement $d = o(n^{1/3})$ in order for our universality approximation to be valid. 
Meanwhile, if the dependence is such that
\begin{align*}
    \mean[X_{11}X_{12}X_{13}X_{14}] 
    \;=\; 
    \mean[X_{11}X_{11}X_{12}X_{13}] 
    \;=\; 
    0\;,
    \tagaligneq \label{eq:unbreak:moment:cond}
\end{align*}
then the bound \eqref{eq:break:moment:cond} and therefore \eqref{eq:simpler:V:intermediate:one} remains dimension-free. Note however that in this case, the additional error \eqref{eq:simpler:V:intermediate:two} for V-statistics still requires $d=o(n)$.
\end{example}

\vspace{-1.5em}

\begin{remark}[Comparison to known works and optimality of dimension dependence] \label{remark:compare:high:d:CLT} Since $v_2(X)=\| \frac{1}{n} \sum_{i \leq n} X_i \|^2$, the universality result for $v_2(X)$ is exactly the Gaussian approximation of a high-dimensional average with respect to the centred Euclidean balls.
    \\
(i) Existing bounds on high-d CLT in Euclidean balls \citep{sazonov1972bound,bentkus2003dependence,zhilova2020nonclassical,fang2024large} typically require the covariance matrix $\Sigma$ of $X_1$ to be invertible and involve terms of the form $\mean | \Sigma^{-1/2} X_1 |^3$. While those results may accommodate the non-invertible $\Sigma$ case by a linear transformation and replacing $\Sigma^{-1}$ by the pseudo-inverse $\Sigma^\dagger$, $\mean | (\Sigma^\dagger)^{1/2} X_1 |^3$ can still be cumbersome to compute. In contrast, we do not require $\Sigma$ to be invertible and our bounds do not involve $\Sigma^{-1}$. This can have useful practical implications, and we defer a discussion to \Cref{sec:high:d:averages}.
\\
(ii) The $d=o(n^{1/3})$ condition in \Cref{example:break:moment:cond} is sub-optimal compared to the non-improvable $d=o(n)$ condition in \citep{fang2024large}, proved in the case of an invertible $\Sigma$. This is because we have used a crude bound to illustrate the role of coordinate dependence. If we take $\Sigma=I_d$ and assume that $X_1$ has uniformly bounded coordinates, then
\begin{align*}
    n^{-\frac{1}{3}}
    \mfrac{\| \frac{1}{d} X_1^\top X_2 \|_{L_3}^2}{\| \frac{1}{d} X_1^\top X_2 \|_{L_2}^2}
    =
    n^{-\frac{1}{3}}
    \mfrac{ \big( \mean \big[ \big| \frac{1}{d} X_1^\top X_2  \big| \times \big| \frac{1}{d} X_1^\top X_2  \big|^2  \big] \big)^{\frac{2}{3}} }{
        \mean \big| \frac{1}{d} X_1^\top X_2  \big|^2 
        }
    =
    O\Big( 
        \mfrac{n^{-\frac{1}{3}}}{
        \big( \mean \big| \frac{1}{d} X_1^\top X_2  \big|^2  \big)^{\frac{1}{3}}
        }    
    \Big)
    =
    O\Big( \mfrac{d^{1/3}}{n^{1/3}}\Big)
    \,.
\end{align*}
Substituting this into the bound in \Cref{example:break:moment:cond} gives $d=o(n)$ as a requirement for \eqref{eq:simpler:V:intermediate:one} to decay to zero. Meanwhile under case (iii) in \Cref{example:simple:V}, $d=o(n)$ also allows \eqref{eq:simpler:V:intermediate:two} to decay to zero. 
\end{remark}
\color{black}

\Cref{example:simple:V,example:break:moment:cond} show that for both V and U-statistics, the universality approximation error can be dimension-sensitive, if the different data coordinates possess arbitrary dependence that do not manifest as correlations. 
However, the 
requirements for V-statistics 
are more stringent than for U-statistics, and arise differently in the literature:
\begin{itemize}
    \item For the degenerate U-statistic $u_2(X)$, the universality approximation error only involves \eqref{eq:simpler:V:intermediate:one}, which does not have dimension dependence if the second and third moment terms, $\|\frac{1}{d} X_1^\top X_2\|_{L_2}$ and $\|\frac{1}{d} X_1^\top X_2\|_{L_3}$, are asymptotically on the same scale. As shown in \cite{huang2023high} and more generally in our \Cref{sec:degree:m:U:stats}, for general U-statistics, this is the requirement that the U-statistic kernel function $u: \R^d \times \R^d \rightarrow \R$ satisfies that  
    $\|u(X_1, X_2)\|_{L_2}$ and $\|u(X_1, X_2)\|_{L_3}$ are on the same order and that  $\| \mean[u(X_1, X_2) | X_1]\|_{L_2}$ and $\| \mean[ u(X_1, X_2) | X_1]\|_{L_3}$ are on the same order. 
    Since $\mean[u(X_1,X_2)]$ is the target of estimation, this is a natural assumption for many practical applications, and therefore Gaussian polynomial approximation is viewed as dimension-agnostic for U-statistics.
    \item For the V-statistic $v_2(X)$, due to the additional term \eqref{eq:simpler:V:intermediate:two} in the bound, the dimension dependence additionally arises directly from $\| \frac{1}{d} X_1^\top X_2 \|_{L_2}^{-1}$ and not just from the ratio $\| \frac{1}{d} X_1^\top X_2 \|_{L_3} / \| \frac{1}{d} X_1^\top X_2 \|_{L_2}$. A similar observation extends to degree-$m$ V-statistics. At a high level, this is because the additional error term \eqref{eq:simpler:V:intermediate:two} for V-statistics arises from approximating $\cN(\mean[X_i^{\otimes m}], \Var[X_i^{\otimes m}])$ by $Z_i^{\otimes m}$, which involves comparisons of  $\| \frac{1}{d} X_1^\top X_2 \|_{L_2}$ against higher-order moments. Therefore even if \eqref{eq:simpler:V:intermediate:one} stays bounded, e.g.~in the case when coordinates are uncorrelated and dependent such that \eqref{eq:unbreak:moment:cond} holds, \eqref{eq:simpler:V:intermediate:two} can still introduce dimension sensitivity. To relate this discussion to the works on Gaussian approximation in high-dimensional Euclidean balls \citep{sazonov1972bound,bentkus2003dependence,zhilova2020nonclassical,fang2024large}, the dimension sensitivity in their approximation arises because they consider e.g.~our $v_2(X)$ from \Cref{example:simple:V}.
    In that context, the different moments of $\frac{1}{d} X_1^\top X_2$ are indeed not guaranteed to be on the same order, which causes both \eqref{eq:simpler:V:intermediate:one} and \eqref{eq:simpler:V:intermediate:two} to become dimension-sensitive; see \Cref{sec:high:d:averages}.
\end{itemize}
In short, the seeming disagreement between the dimension-agnostic literature on U-statistics \citep{kim2024dimension,huang2023high,gao2025dimension} and the dimension-sensitive results for V-statistics \citep{zhilova2020nonclassical} can be attributed to two factors: (i) our additional universality approximation error term for non-multilinear statistics, and (ii) the different applications contexts that lead to different moment assumptions.

\subsection{Nearly matching lower bound and dependence on the degree $m$ } \label{sec:lower:bound} The universality approximation bounds for all toy statistics considered so far (\Cref{example:average,example:simple:U,example:simple:V}) are sub-optimal in $n$.  As discussed in \Cref{sec:related}, this sub-optimality is a known issue with Lindeberg's method for these statistics, and raises the question whether the error bound from \Cref{thm:main} can be tight. Our next result shows that there exists a degree-$m$ polynomial for which the upper bound from \Cref{thm:main} is essentially tight. We also state an approximation with respect to $\Xi=(\xi_i)_{i \leq n}$, the Gaussian surrogates for the augmented variables $\bX_i = \tvec( (X_i-\mean[X_i], \ldots, X_i^{\otimes m}-\mean[X_i^{\otimes m}]))$ (see \Cref{appendix:non:multilinear}  for a precise formulation).

\begin{theorem}[Lower bound]
  \label{thm:lower:bound} Fix $\nu \in (2,3]$, and assume that $m$ is even with $m = o(\log n)$.
    Then there exist a sequence of probability measures $\mu^{(n)}_{\nu}$\,, a sequence of polynomials ${p^*_m=p^*_{m(n)}}$, and
    absolute constants ${c, C > 0}$ and ${N \in \N}$, such that
    \begin{align*}
        c n^{-\frac{\nu-2}{2\nu m}} 
        \;\leq&\; 
        \msup_{t \in \R} \big| \P( p^*_m(X) \leq t) - \P( p^*_m(Z) \leq t) \big|
        \;\leq\;
        C m n^{-\frac{\nu-2}{2\nu m +2 }} \;,
    \end{align*}
    for i.i.d.\ variables ${X_1, \ldots, X_n\sim\mu^{(n)}_{\nu}}$ and all $n \geq N$. Moreover, there exists a sequence of multilinear polynomials $q^*_m$ such that
    \begin{align*}
        c n^{-\frac{\nu-2}{2\nu m}} 
        \;\leq&\; 
        \msup_{t \in \R} \big| \P\big( p^*_m(X) - \mean[p^*_m(X)] \leq t  \big) - \P\big( q^*_m(\Xi) \leq t \big) \big|
        \;\leq\;
        C m n^{-\frac{\nu-2}{2\nu m +2 }}\;.
    \end{align*}
\end{theorem}

\begin{proof}
  See 
  Appendix G. 
  The proof is inspired by an asymmetric construction by Senatov \cite{senatov1998normal} that controls the probability of a centred, multivariate average lying in a sequence of uncentred Euclidean balls. For our problem, a different asymmetrisation is required to obtain a comparable lower bound to the upper bound from \cref{thm:main}.
  \Cref{sec:construct:lower:bound} motivates the proof and constructs $(\mu^{(n)}_{\nu, \sigma_0})$, $(p_m^*)$ and $(q_m^*)$ explicitly. $q^*_m$ is essentially a reparameterisation of $p^*_m$ by viewing the augmented vectors $(\bX_i)_{i \leq n}$ as input variables.
\end{proof}

\Cref{thm:lower:bound} says that, if the only knowledge we have about our estimator is that it is a polynomial of degree ${\leq m}$, Lindeberg's principle yields a nearly tight rate in $n$. The upper bounds of \Cref{thm:lower:bound} are obtained directly from applying \Cref{thm:main} to the multilinear reparameterisation of $p_m$ (see Corollary A.1 in the appendix) and \Cref{thm:VD}, and under $\nu=3$, match the $O(mn^{-\frac{1}{6m+2}})$ heuristic rate \eqref{eq:main:heuristic} for \Cref{thm:main}. Meanwhile, the lower bounds confirm that the requirement $m=o(\log n)$ is necessary. 

To reconcile this result with the sub-optimality of \Cref{thm:main} for classical statistics (\Cref{example:average}), we note that \Cref{thm:lower:bound} only applies to the non-classical regime, where the probability measure of the data $\mu_\nu^{(n)}$ is allowed to depend on $n$. This includes the high-dimensional regime, where the data vectors are $\R^{d(n)}$-valued and therefore the data distribution depends on $n$ via $d(n)$. Moreover, the bounds become closer to each other as the degree $m$ grows.

\Cref{thm:lower:bound} also has concrete implications on the error of approximating general degree-$m$ U-statistics and degree-$m$ V-statistics by Gaussian polynomials. Denote
\begin{align*}
    \tilde u^*_m(Y) 
    \;\coloneqq&\; 
    \mfrac{1}{n(n-1) \ldots (n-m+1)} \msum_{i_1, \ldots, i_m \in [n] \text{ \rm distinct }} k^*_u(Y_{i_1}, \ldots, Y_{i_m}) 
    \;,
    \\
    \tilde v^*_m(Y) 
    \;\coloneqq&\; 
    \mfrac{1}{n^m} \msum_{i_1, \ldots, i_m \in [n]} k^*_v(Y_{i_1}, \ldots, Y_{i_m})
    \;,
    \qquad 
    Y \;\coloneqq\; (Y_1, \ldots, Y_n)\;.
\end{align*}
In \Cref{sec:construct:lower:bound}, we demonstrate how the constructions $p^*_m(X)$ and $q^*_m(X)$ used in \Cref{thm:lower:bound} can be reformulated respectively as a V-statistic $v^*_m(Y)$ and a U-statistic $u^*_m(Y)$. As a consequence, \Cref{thm:lower:bound} immediately implies the following:

\begin{corollary} \label{cor:matching:bound:V} Assume the setup of \Cref{thm:lower:bound} and additionally $m < d$. There exist a sequence of probability measures $\mu^{(n)}_{\nu}$, a symmetric function $k^*_v: (\R^d)^m \rightarrow \R$ and absolute constants ${c, C > 0}$ and ${N \in \N}$, such that
\begin{align*}
    c n^{-\frac{\nu-2}{2\nu m}} 
    \;\leq&\; 
    \msup_{t \in \R} \big| \P( \tilde v^*_m(Y) \leq t) - \P( p^*_m(Z) \leq t) \big|
    \;\leq\;
    C m n^{-\frac{\nu-2}{2\nu m + 2}} 
\end{align*}
for i.i.d.\ variables ${Y_1, \ldots, Y_n\sim\mu^{(n)}_{\nu}}$ and all $n \geq N$. Moreover, there exists another sequence of probability measures $\tilde \mu^{(n)}_{\nu}$ and a symmetric function $k^*_u: (\R^d)^m \rightarrow \R$ such that 
\begin{align*}
    c n^{-\frac{\nu-2}{2\nu m}} 
    \;\leq&\; 
    \msup_{t \in \R} \big| \P( \tilde u^*_m(Y) \leq t) - \P( q^*_m(\Xi) \leq t) \big|
    \;\leq\;
    C m n^{-\frac{\nu-2}{2\nu m + 2}} 
\end{align*}
for i.i.d.\ variables ${Y_1, \ldots, Y_n\sim \tilde \mu^{(n)}_{\nu}}$ and all $n \geq N$. 
\end{corollary}

\Cref{cor:matching:bound:V} says that the Gaussian polynomial approximations are valid for both $\tilde v^*_m(Y)$ and $\tilde u^*_m(Y)$, but the approximation error decays at a slow rate. For $\nu=3$, the lower bound for degree-two U-statistics is on the order $n^{-1/12}$, which closes an open conjecture by \cite{yanushkevichiene2012bounds}. More generally, \Cref{cor:matching:bound:V} here says that for every even degree $m \in \N$ and every $\gamma \in (0, 1]$, there exists a degree-$m$ U-statistic and a degree-$m$ V-statistic of random vectors, whose Gaussian polynomial approximation errors are between $n^{-\gamma / 6m}$ and $ m n^{-\gamma / (6m+2)}$. 

\section{Applications} \label{sec:applications}

This section provides statistical applications of our theorems, some of which are new and some of which generalise existing results. \Cref{sec:normality} presents a necessary and sufficient condition for asymptotic normality of an approximately polynomial estimator $f(X)$ in the high dimensional regime. Using the insights gained, \Cref{sec:bootstrap} constructs a U-statistic that is asymptotic normal but for which bootstrap fails to be consistent. \Cref{sec:degree:m:U:stats} extends the phase transition in \Cref{example:simple:U} and provides distributional approximations of degree-$m$ complete U-statistics of high-dimensional data. In a similar vein, \Cref{sec:MMD:imbalance} establishes the phase transition of the Maximum Mean Discrepancy (MMD) test statistic for a high-dimensional two-sample test with imbalanced sample sizes. \Cref{sec:high:d:averages} gives a non-classical Berry-Ess\'een bound used for confidence sphere constructions for high-dimensional averages, where the data may have non-invertible covariance matrices. \Cref{sec:delta:method} discusses how a non-classical delta method can be obtained from our results (formal statements included in \Cref{appendix:delta:method}) and discusses how consistency and asymptotic normality can fail in 
high dimensions.

For simplicity, several results are presented as asymptotic statements, but finite-sample bounds can be found in their proofs in Appendices I--M. We use the phrase \emph{non-classical asymptotic as $n \rightarrow \infty$} to refer to the asymptotic where $n \rightarrow \infty$ and  $d=d(n)$, $m=m(n)$, the distribution of the random vectors and the statistic are all allowed to vary in $n$. This includes the high-dimensional regime where $d$ and $m$ grow in $n$, but also extends to other setups. 


\subsection{Necessary and sufficient condition for asymptotic normality} \label{sec:normality} All results in \Cref{sec:main:results} concern the approximations of a statistic $f(X)$ by some Gaussian polynomial $p_m(Z) = p_m(Z_1, \ldots, Z_n)$. By Gaussianity, for each $1 \leq i \leq n$, we can rewrite 
\begin{align*}
    Z_i \;=\; \mean[Z_i] + \Var[Z_i]^{1/2} (\eta_{i1}, \ldots, \eta_{id})^\top 
    \;,
\end{align*}
where $(\eta_{il})_{i \leq n, l \leq d}$ are i.i.d.~univariate standard normals. This allows us to view $p_m(Z)$ as a degree-$m$ polynomial of $nd$ i.i.d.~standard normal variables.

Suppose for now that $\mean[p_m(Z)] = 0$ for simplicity. This polynomial lives in the span of products of Hermite polynomials with total degree $\leq m$, and therefore $p_m(Z)$ lives in the $m$-th order Wiener chaos of $(\eta_{il})_{i \leq n, l \leq d}$ (see \cite{nourdin2013lectures} for an introduction). If $m$ is fixed, the fourth moment theorem by Nualart and Peccati \cite{nualart2005central} applies and shows that $p_m(Z)$ is asymptotically Gaussian as $n \rightarrow \infty$ if and only if its excess kurtosis, defined as
\begin{align*}
    \Kurt[p_m(Z)] \;\coloneqq\; 
    \mean[ (p_m(Z) - \mean[p_m(Z)])^4 ] \,/\, \Var[p_m(Z)]^2 - 3 
    \;,
\end{align*}
is asymptotically zero. 

To establish a similar result for the case where degree $m$ may grow, we leverage on the finite-sample bound developed by Nourdin and Peccati \cite{nourdin2015optimal} measured by the total variation distance $d_{\rm TV}$. The next result says that a vanishing excess kurtosis is sufficient even as $m$ grows, and is necessary under a uniform integrability condition.

\begin{proposition}[Fourth moment phenomenon] \label{prop:gaussian} Let $p_m: (\R^d)^n \rightarrow \R$ be a generic degree-$m$ polynomial function, $Z=(Z_1, \ldots, Z_n)$ be a collection of independent Gaussian vectors, and $\eta$ be a univariate standard normal variable independent of all other quantities. For every $n, m, d \in \N$ and $\sigma = \sqrt{\Var[p_m(Z)]} > 0$\,, 
\begin{align*}
    d_{\rm TV}( \sigma^{-1} (p_m(Z) - \mean[p_m(Z)]) \,,\, \eta )
    \;\leq&\;
    \big(\mfrac{4m-4}{3m} \, \big| \Kurt[p_m(Z)] \big| \big)^{1/2}
    \;.
\end{align*}
Moreover, under the non-classical asymptotic as $n \rightarrow \infty$, 
\begin{proplist}
    \item $\Kurt[ p_m(Z)] \rightarrow 0$ is a sufficient condition for $\sigma^{-1} (p_m(Z) - \mean[p_m(Z)]) \overset{d}{\rightarrow} \eta$;
    \item if $\sigma^{-4} (p_m(Z) - \mean[p_m(Z)])^4$ is uniformly integrable,  $\Kurt[p_m(Z)] \rightarrow 0$ is also necessary.
\end{proplist}
\end{proposition}

\begin{proof}[Proof of \cref{prop:gaussian}] The finite-sample bound is a restatement of Theorem 1.1 of Nourdin and Peccati \cite{nourdin2015optimal} for $p_m(Z)$, and directly implies (i). (ii) is proved by noting that, when $\sigma^{-1} p_m(Z) \overset{d}{\rightarrow} \eta$, by continuous mapping theorem, $\sigma^{-4} p_m(Z)^4 \overset{d}{\rightarrow} \eta^4$. Uniform integrability then implies the desired moment convergence (see Theorem 25.12 of \cite{billingsley1995probability}). 
\end{proof}

By identifying $p_m(Z)$ as the multilinear Gaussian polynomial $q_m(Z)$ in \Cref{thm:main} (e.g.~up to a reparameterisation, see \Cref{appendix:non:multilinear}), we can replace $\mean[p_m(Z)]$ and $\sigma^2$ in \Cref{prop:gaussian} by the mean and variance of the original statistic $p_m(X)$. Combining \Cref{prop:gaussian} with any Gaussian polynomial approximation result in \Cref{sec:main:results} by the triangle inequality, we immediately obtain a normal approximation bound for the statistic of interest. Indeed, for any statistic for which a Gaussian polynomial approximation is valid, \Cref{prop:gaussian} provides a necessary and sufficient condition for its asymptotic normality. A practical consequence is that, even if $p_m(X)$ is not asymptotically Gaussian when $d$ and $m$ are fixed, it can become Gaussian in the high-dimensional regime. Similar phenomenon has already been observed for U-statistics \cite{janson1991asymptotic,yan2021kernel,bhattacharya2022asymptotic}, and we provide a toy example below.

\begin{example}[Degenerate V-statistics can be asymptotically normal] \label{example:degen:V:normal} Consider the V-statistic $v_2(X) = \frac{1}{n^2} \sum_{i,j \leq n} X_i^\top X_j$ in \Cref{example:simple:V}. Under universality, we can replace $X_i$'s by Gaussians. Suppose for simplicity that $X_i \sim \cN(0, \Sigma)$. This allows us to express the rescaled statistic via some i.i.d.~standard normal vectors $(\eta_i)_{i \leq n}$ in $\R^d$:
\begin{align*}
    n \,  v_2(X) \;=\; \Big( \mfrac{1}{\sqrt{n}} \msum_{i \leq n} \eta_i\Big)^\top \Sigma \, \Big( \mfrac{1}{\sqrt{n}} \msum_{i \leq n} \eta_i\Big) \;.
\end{align*}
For $d=1$, this is a chi-squared variable with $1$ degree of freedom. For $d \rightarrow \infty$ and $\Sigma = \frac{1}{d} I_d$, this becomes asymptotically normal; in general, whether $n \, v_2(X)$ is asymptotically normal depends on the limiting spectrum of $\Sigma$. For comparison, Bhattarcharya et al.~\cite{bhattacharya2022asymptotic} characterise the asymptotic distribution of a large family of quadratic forms, which includes $n \, v_2(X)$ here as a special case. Their condition for asymptotic normality is precisely described by the fourth moment phenomenon, which they relate to the spectral behaviour of $\Sigma$. \cite{bhattacharya2022asymptotic} also demonstrates further statistical applications in subgraph count statistics. Our results can be viewed as a degree-$m$ extension with finite-sample bounds; see \Cref{sec:degree:m:U:stats} for a discussion.
\end{example}

\subsection{An asymptotically normal statistic for which bootstrap is inconsistent} \label{sec:bootstrap} 
A large of literature has shown how bootstrap can fail to be consistent for many statistics of interest \citep{hall1993inconsistency,andrews2000inconsistency,sen2010inconsistency,lin2024failure}, with many focusing on ranking-based statistics or statistics under irregularity conditions. Our main results can be combined with the observation in \Cref{sec:normality} to showcase a particular example of failure of bootstrap, despite asymptotic normality of the statistic of concern and due to the high-dimensional regime. We seek to approximate the simple U-statistic $u_2(X) = \frac{1}{n(n-1)} \sum_{i \neq j} X_i^\top X_j$ in \Cref{example:simple:U}, by using the bootstrap samples 
\begin{align*}
  X^{(b)}_1, \ldots, X^{(b)}_n  \,|\, (X_1, \ldots, X_n) \;\sim\; \textrm{Uniform}\{X_1, \ldots, X_n\}\;.  
\end{align*}
For $d=1$, the classical work of Bickel and Freedman \citep{bickel1981some} shows that in the non-degenerate case where $\mean[X_i] \neq 0$, the standard bootstrap estimate 
\begin{align*}
    U^{(b)}_1 \;\coloneqq\; u_2\big( X^{(b)}_1, \ldots, X^{(b)}_n \big)
\end{align*}
is consistent. However, for the degenerate case where $\mean[X_i] = 0$, Arcones and Gin\'e \cite{arcones1992bootstrap} shows that $U^{(b)}_1$ is inconsistent. Their proposal is exploit the degeneracy condition $\mean[X_i] = 0$ and approximate each $X_i$ by the conditionally centred bootstrap variable $X^{(b)}_i - \bar X$, where $\bar X = \frac{1}{n} \sum_{i \leq n} X_i$. The proposed bootstrap estimate is then 
\begin{align*}
    U^{(b)}_2 
    \;\coloneqq\;
    u_2\big( X^{(b)}_1 - \bar X, \ldots, X^{(b)}_n - \bar X \big)
    \;=\;
    \mfrac{1}{n(n-1)} \msum_{i \neq j } ( X^{(b)}_i - \bar X)^\top ( X^{(b)}_j - \bar X) 
    \;.
\end{align*}
This corresponds to performing a bootstrap approximation directly on the degenerate component of $u_2(X)$ under the Hoeffding decomposition (see \Cref{sec:degree:m:U:stats} for a precise definition of the Hoeffding decomposition). For fixed $d$, \cite{arcones1992bootstrap} proves that $U^{(b)}_2$ is consistent for the degenerate case and that, for more general degenerate U-statistics, the same idea works by performing a bootstrap procedure on the correct Hoeffding component.

In the notation of \Cref{example:simple:U}, the above results say that $U^{(b)}_1$ is consistent when the first-order term $q_1(X)$ dominates in $u_2(X)$, and $U^{(b)}_2$ is consistent when the second-order term $q_2(X)$ dominates in $u_2(X)$. However, in the high-dimensional regime, either component can dominate even if $u_2(X)$ is non-degenerate with $\mean[X_i] \neq 0$ (\Cref{example:simple:U}). Consequently, the choice of which bootstrap to use may depend on information that is unavailable to the user.

We formalise this intuition in the next result. For $l=1,2$, we say that $U^{(b)}_l$ is \emph{consistent} if 
\begin{align*}
    &\;
    \msup_{t \in \R} \,\big|\, \P\big( U^{(b)}_l - \mean\big[ U^{(b)}_l  \,\big|\, X \big] \leq t \,\big| X \big) - \P( u_2(X) - \mean[ u_2(X) ] \leq t) \big| \;\xrightarrow{\P}\; 0
    \;.
\end{align*}
Otherwise, we say that $U^{(b)}_l$ is not consistent. 
We shall focus on the simplifying setup:

\begin{assumption}  \label{assumption:bootstrap:model} (i) $X_1, \ldots, X_n$ are generated from the model 
$ X_i \coloneqq \mu + \tau U_i$, where $\mu \in \R^d$ and $\tau \in \R$ are deterministic (and may depend on $n$), and $U_i$'s are i.i.d.~random vectors; \\
 (ii) $U_1$ has i.i.d.~$1$-sub-Gaussian entries, each with zero mean  and unit variance;\\ (iii) $d^{1/4}  \| \mu \|_{l_4}$,  $d^{\frac{\nu-2}{2\nu}}  \| \mu \|_{l_\nu}$, and $d^{\frac{\nu-1}{2\nu}}  \| \mu \|_{l_{2\nu}}$ are all $\Theta( \| \mu \|_{l_2})$, where $\nu \in (2,3]$ is fixed. 
\end{assumption}

\Cref{assumption:bootstrap:model}(iii) says that the vector norm inequalities $ \|\mu \|_{l_2} \leq d^{1/4} \, \| \mu \|_{l_4}$ and so on are asymptotically tight up to a constant. This can be verified for $\mu$ with the same entries, as well as for $\mu$'s whose bulk of the entries are approximately on the same scale. In the asymptotic normality result below, this condition plays the role of a Lindeberg condition for CLT for weighted averages of the form $d^{-1/2} \mu^\top X_i =  d^{-1/2} \sum_{l \leq d} \mu_l X_{il}$.

\begin{proposition}[Bootstrap inconsistency for an asymptotically normal U-statistic] \label{prop:bootstrap:inconsistency} Let $u_2(X) =  \frac{1}{n(n-1)} \sum_{i \neq j} X_i^\top X_j$. Under \Cref{assumption:bootstrap:model}, as $n, d \rightarrow \infty$, we have 
\begin{align*}
    (\Var[u_2(X)])^{-1/2} ( u_2(X) - \mean[u_2(X)]) \;\xrightarrow{d}\; \cN(0,1)\;,
\end{align*}
where the rate of convergence is described by 
\begin{align*}
    (\Var[u_2(X)])^{1/2} \;=\; \Big( 
         \mfrac{4 \tau^2 \| \mu \|_{l_2}^2}{n} + \mfrac{2 \tau^4 d}{n(n-1)}
    \Big)^{1/2}
    \;.
\end{align*}
Let $\nu \in (2,3]$ be given as in \Cref{assumption:bootstrap:model}(iii). If additionally $d=o(n^{2/\nu})$, then 
\begin{proplist}
    \item $ U^{(b)}_1$ is consistent if and only if  $\tau d^{1/2} = o( n^{1/2} \| \mu \|_{l_2} )$;
    \item $ U^{(b)}_2$ is consistent if and only if $\| \mu \|_{l_2} = o(1)$ and $\tau d^{1/2} = \omega( n^{1/2} \| \mu \|_{l_2} )$.
\end{proplist}   
\end{proposition}

\begin{remark}[Comparing \Cref{prop:bootstrap:inconsistency} to the classical case] \Cref{prop:bootstrap:inconsistency} requires $d \rightarrow \infty$. Nevertheless, it is instructive to consider the case where $d$ grows at a negligible rate and perform an informal comparison of the consistency conditions to those in the the classical case. Set $\tau$ and $\mu$ to be $O(1)$. Then, the consistency condition for $U^{(b)}_1$ is equivalent to saying that either $\mu \neq 0$, i.e.~$u_2(X)$ is non-degenerate, or $\mu = 0$ and $\tau = 0$, i.e.~all data are zero almost surely. Meanwhile, the consistency condition for $U^{(b)}_2$ requires that $\mu=0$, i.e.~$u_2(X)$ is degenerate. These agree with the classical bootstrap results surveyed above.
\end{remark}

\Cref{prop:bootstrap:inconsistency} shows that both bootstrap estimates  can exhibit nuanced behaviours, despite (i) the simple setup of \Cref{assumption:bootstrap:model}, (ii) asymptotic normality of $u_2(X)$, and (iii) the fact that $d$ is only growing at a moderate rate $d=o(n^{2/\nu})$. The consistency conditions can be viewed as conditions on the signal-to-noise ratio $\| \mu \|_{l_2} / \tau$, and in the intermediate regime where $\| \mu \|_{l_2} / \tau$ scales as $\sqrt{d / n}$, both methods fail. Importantly in many statistical applications, we typically use $u_2(X)$ to estimate $\mean[u_2(X)] = \| \mu \|_{l_2}^2$ and employ bootstrapped confidence intervals due to the lack of knowledge of the noise level $\tau$. Since the consistency conditions depend on the unknown quantities $\mu$ and $\tau$, this points to a substantial issue of using either bootstrap methods for U-statistics, even when the dimension grows moderately in $n$.

While \Cref{prop:bootstrap:inconsistency} only applies to the toy U-statistic $u_2(X)$, we conjecture that a similar issue can be present for general U-statistics of high-dimensional data, in view of known ways of generalising bootstrap results on $u_2(X)$ to general U-statistics \cite{arcones1992bootstrap}. For instance, if $\tau U_i$ is replaced by some $\tilde U_i$ with non-i.i.d.~coordinates, we expect the condition to depend more intricately on the coordinate-wise dependence of $\tilde U_1$, as we have observed in \Cref{sec:multilinear}. For more general U-statistics, $d$, $\mu$ and $\tau$ need to be replaced by quantities that depend on the Hoeffding decompositions of the U-statistics and on the kernel of the U-statistic.

We conclude by some comments on the proof. Bootstrap validity results are typically proved as applications of distributional approximation results \citep{bickel1981some,arcones1992bootstrap,zhilova2020nonclassical,austern2020asymptotics}, and so is \Cref{prop:bootstrap:inconsistency} here. Indeed, our proof proceeds by applying the universality result (\Cref{thm:main}) both for approximating $X_i$'s by Gaussians and for approximating $X^{(b)}_i$'s by conditional Gaussians given $X$. The result then follows by analysing the corresponding Gaussian polynomials. 

\begin{remark}[Potential remedy and further issues] If $\mu$ has equal entries, one potential remedy is to view the i.i.d.~entries of $X_1, \ldots, X_n$ as $nd$ i.i.d.~variables and perform bootstrap over them. In view of classical bootstrap results for non-identically distributed data \cite{liu1988bootstrap}, this may remain valid even when the entries of $\mu$ are only approximately equal, similar to our \Cref{assumption:bootstrap:model}(iii). However, we do not expect this method to perform well if the entries of $\mu$ are substantially different. When one has no information about the structure of $\mu$, it remains unclear whether there is a version of bootstrap that is guaranteed to be valid.
\end{remark}

\subsection{Distribution approximations of degree-$m$ U-statistics of high-dimensional data} \label{sec:degree:m:U:stats} Let $Y_1, \ldots, Y_n$ be i.i.d.~random variables taking values in some general (not necessarily Euclidean), possibly $n$-dependent measurable space $\cE \equiv \cE(n)$. Given a symmetric function $u: \cE^m \rightarrow \R$, we study the degree-$m$ complete U-statistic given by 
\begin{align*}
    u_m(Y) 
    \;\coloneqq&\; 
    \mfrac{1}{n(n-1) \ldots (n-m+1)} \msum_{i_1, \ldots, i_m \in [n] \text{ \rm distinct }} u(Y_{i_1}, \ldots, Y_{i_m}) 
    \tagaligneq \label{eq:defn:u}
    \;.
\end{align*}
Classically, under the H\'ajek projection technique, the asymptotic distribution of $u_m(Y)$ can be approximated by that of a degree-$M$ polynomial of Gaussians, where
\begin{align*} 
    M \;\coloneqq\; 
    \min \{ j \in [m] \,:\, \sigma_j > 0 \}\;,
    \qquad \text{and } \quad
    \sigma_j^2 \coloneqq \Var\, \mean[ u(Y_1, \ldots, Y_m) \,|\, Y_1, \ldots, Y_j ] \;.
\end{align*}
See \cite{filippova1962mises}, \cite{rubin1980asymptotic} and Chapter 5.4 of \cite{serfling1980approximation} for references on the classical theory for U-statistics.

In this section, we show how the high-dimensional asymptotics of $u_m(Y)$, where $\cE=\R^d$ with $m$ and $d$ allowed to be large relative to $n$, can again be obtained by universality. A particular focus is to generalise the phase transition observed for the simple degree-two U-statistic in \Cref{example:simple:U} to general degree-$m$ U-statistics. The main result is a bound on the universality approximation, which 
generalises a number of results in the literature and provides finite-sample bounds for some known asymptotic statements.


\vspace{.5em}

\noindent
\textbf{Decomposition of $u$. } We use a functional decomposition assumption to obtain finite-sample bounds. For $K \in \N$, we approximate $u(y_1, \ldots, y_m)$ by a polynomial
\begin{align*}
    \msum_{k_1, \ldots, k_m=1}^K \lambda^{(K)}_{k_1 \ldots k_m} \phi^{(K)}_{k_1}(y_1) \times \ldots \times \phi^{(K)}_{k_m}(y_m)\;,
\end{align*}
where for each $K$, the coordinates of the $m$ vectors are defined by a triangular array of $\cE \rightarrow \R$ functions $\{\phi^{(K)}_k\}_{k \leq K, K \in \N}$, 
and the polynomial weights are given by a triangular array of real values $\{\lambda^{(K)}_{k_1 \ldots k_m}\}_{k_1, \ldots, k_m \leq K, K \in \N}$. We also denote the $L_\nu$ approximation error as
\begin{align*}
    \varepsilon_{K;\nu} \;\coloneqq\; \big\| \msum_{k_1, \ldots, k_m=1}^K \lambda^{(K)}_{k_1 \ldots k_m} \phi^{(K)}_{k_1}(Y_1) \times \ldots \times \phi^{(K)}_{k_m}(Y_m)
    -
    u(Y_1, \ldots, Y_m) \big\|_{L_\nu}
    \;.
\end{align*}


\begin{assumption} \label{assumption:L_nu} There exists some $\nu \in (2,3]$ such that, for every fixed $n$, $m$, $d$ and $\cE$, as $K \rightarrow \infty$, the error $\varepsilon_{K;\nu} \rightarrow 0$ for some choice of $\{\phi^{(K)}_k\}_{k \leq K, K \in \N}$ and $\{\lambda^{(K)}_{k_1 \ldots k_m}\}_{k_1, \ldots, k_m \leq K, K \in \N}$.
\end{assumption}

\begin{remark} \label{remark:decomposition} (i) \cref{assumption:L_nu} is mild: In the appendix, we show that it holds with $\nu=2$ under mild assumptions 
(\Cref{lem:L2:decomposition}), 
and that it holds for $\cE=\R^d$ and any $u$ well-approximated by a Taylor expansion (\Cref{lem:assumption:taylor}). For $m=2$, \cite{huang2023high} has verified \cref{assumption:L_nu} for general kernels under Mercer decomposition and for specific kernels via the Taylor expansion technique described by our \Cref{lem:assumption:taylor}.
(ii) Unlike asymptotic U-statistics results that use the $L_2$ decomposition from Hilbert-Schmidt operator theory (see e.g.~Chapter 5.4 of \cite{serfling1980approximation}), \cref{assumption:L_nu} does not require orthogonality or boundedness conditions on $\phi^{(K)}_k$ or $\lambda^{(K)}_{k_1 \ldots k_m}$ and also allow them to vary with $K$ or $n$. In particular, they are generally not unique and can be chosen at convenience for verification, 
e.g.~by a suitable Taylor expansion.
\end{remark}


\noindent
\textbf{Penultimate approximation. } To identify the dominating component, we first recall that by Hoeffding's decomposition theorem (see e.g.~Theorem 1.2.1 of \cite{denker1985asymptotic}), 
\begin{align*}
    u_m(y_1, \ldots, y_n) 
    \;=\; 
    \mean[ u(Y_1, \ldots, Y_m) ] \, + \, \msum_{j=1}^m \mbinom{m}{j} \, U^{\rm H}_j(y_1, \ldots, y_n) \;, \tagaligneq \label{eq:hoeffding:decompose}
\end{align*}
where each Hoeffding's decomposition $U^{\rm H}_j(Y)$ is a degenerate degree-$j$ U-statistic
\begin{align*}
    U^{\rm H}_j(y_1, \ldots, y_n) \;\coloneqq&\; 
    \mfrac{1}{n(n-1) \ldots (n-j+1)} \msum_{i_1, \ldots, i_j \in [n] \text{ \rm distinct }} u^{\rm H}_j(y_{i_1}, \ldots, y_{i_j})\;,
    \\
    u^{\rm H}_j(y_1, \ldots, y_j)
    \;\coloneqq&\;
    \msum_{r=0}^j (-1)^{j-r} \msum_{1 \leq l_1 < \ldots < l_r \leq j }  
    \mean\big[ u(y_{l_1}, \ldots, y_{l_r}, Y_1, \ldots, Y_{m-r}) \big]\;.
\end{align*}
Under \cref{assumption:L_nu}, each Hoeffding's decomposition can be approximated by a polynomial of $\R^K$ vectors, to which our universality results can be applied. Let $\Xi^{(K)} \coloneqq \{ \xi^{(K)}_1, \ldots, \xi^{(K)}_n \}$ be a collection of i.i.d.~zero-mean Gaussian random vectors in $\R^{mK}$ with the same variance as $(\phi^{(K)}_1(Y_1), \ldots, \phi^{(K)}_K(Y_1))^\top$. For $j \in [m]$ and $K \in \N$, the penultimate approximation for $U^{\rm H}_j(Y)$ is therefore given as a polynomial of $\R^K$ Gaussians,
\begin{align*}
    & U^{(K)}_j ( \Xi^{(K)})
    \;\coloneqq\;   
    \mfrac{1}{n(n-1) \ldots (n-j+1)} \msum_{i_1, \ldots, i_j \in [n] \text{ \rm distinct }} \tilde u_j^{(K)}(\xi^{(K)}_{i_1}, \ldots, \xi^{(K)}_{i_j})\;, \tagaligneq \label{eq:defn:u:approximating}
    \\
    & u^{(K)}_j(v_1, \ldots, v_j)
    \;\coloneqq\;
    \msum_{k_1, \ldots, k_m=1}^K 
    \, \lambda^{(K)}_{k_1 \ldots k_m} 
    v_{1k_1}  \cdots v_{jk_j}
    \, \mean\big[\phi^{(K)}_{k_{j+1}}(Y_1)\big] \cdots \mean\big[\phi^{(K)}_{k_m}(Y_1)\big] 
    \;.
\end{align*}

\vspace{.5em}

\noindent
\textbf{Moments. } By variance domination, the asymptotic distribution of $u_m(Y)$ is determined by the relative size of the rescaled variances, given for $M \in [m]$ as
\begin{align*}
    \rho_{m,n;M}
    \;&\coloneqq\; 
    \big( \msum_{r \in [m] \setminus \{M\}}  \sigma_{m,n;r}^2 \big) \,\big/ \, \sigma_{m,n;M}^2
    \;,
    \\
    &\text{where }
    \; \sigma_{m,n;j}^2
    \;\coloneqq\; 
    \mbinom{m}{j}^2 
    \mbinom{n}{j}^{-1} 
    \Var\, \mean[ u(Y_1, \ldots, Y_m) \, |\, Y_1, \ldots, Y_j ]
    \qquad\text{ for } j \in [m]
    \;.
\end{align*}
$\sigma^2_{m,n;j}$ describes the contribution of $U^{\rm H}_j$ to the overall variance of $u_m(Y)$ (\Cref{lem:u:variance} in the appendix). 
The bound also depends on a Berry-Ess\'een moment ratio
\begin{align*}
    \tilde \beta_{M,\nu}
    \coloneqq
        \big\| 
            u_M^{\rm H} \big( Y_1,\ldots, Y_M \big)
        \big\|_{L_\nu} 
      \,\big/\,
        \big\| 
            u_M^{\rm H} \big( Y_1,\ldots, Y_M \big) 
        \big\|_{L_2}
      \; 
      \text{ defined for } \nu \in (2,3] \text{ and } M \in [m]
      \;. 
\end{align*}

\begin{proposition} \label{prop:higher:u} Suppose \cref{assumption:L_nu} holds for a fixed $\nu \in (2,3]$, and that for some $M \in [m]$ that is allowed to depend on $n$,
\begin{align*}
    m n^{-\frac{\nu-2}{2(\nu M + 1)}} \tilde \beta_{M,\nu}^{\frac{\nu}{\nu M +1}} 
    \;=&\; o(1)
    &\text{ and }&&
    m \rho_{m,n;M}^{\frac{1}{2M+1}}
    \;=&\;
    o(1)
    \;.
\end{align*}    
Then under the non-classical asymptotic as $n \rightarrow \infty$,
\begin{align*}
    \msup_{t \in \R} \Big| \P\big( u_m(Y) - \mean[ u(Y_1, \ldots, Y_m) ]  \leq t\big) 
    - 
     \lim_{K \rightarrow \infty} 
    \P\Big( \mbinom{m}{M}  \,  U_M^{(K)}(\Xi^{(K)})  \leq t\Big) \Big|
    \;\rightarrow\; 0
    \;.
\end{align*}
If additionally $ \lim_{K \rightarrow \infty}  \Kurt\big[ U_M^{(K)}(\Xi^{(K)})\big] \xrightarrow{n \rightarrow \infty} 0$,  then
\begin{align*}
    \textstyle\lim_{K \rightarrow \infty}^{\rm w}  
    \,\mfrac{ u_m(Y) - \mean[ u(Y_1, \ldots, Y_m) ] }{\binom{m}{M}  \Var\big[   U_M^{(K)}(\Xi^{(K)}) \big]^{1/2}}
    \;
    \;\underset{n \rightarrow \infty}{\overset{d}{\longrightarrow}}\;
    \;
    \cN(0,1)\;.
\end{align*}
$\textstyle\lim^{\rm w}$ denotes the limit in distribution, which we assume to exist for the statement above.
\end{proposition}

\cref{prop:higher:u} allows the degree of the U-statistic $m$ to be much larger than $\log n$: Since $M$ appears in the exponent instead of $m$, we only require the degree of the Gaussian polynomial to satisfy $M \log M = o(\log n)$, 
with no explicit requirement on $m$. Moreover, under the moment conditions on $\rho_{m,n;M}$ and $\tilde \beta_{M,\nu}$, the bound is dimension-agnostic. This generalises the discussion in \Cref{sec:non:multilinear} and allows \cref{prop:higher:u} to recover many known high-dimensional results on U-statistics. For instance, there are two cases in which a normal approximation is valid, but with different variances:
\begin{itemize}
    \item When $M=1$, the distributional limit $\lim_{K \rightarrow \infty}  U_1^{(K)}(\Xi^{(K)})$, when exists, is Gaussian. In this case, \cref{prop:higher:u} provides a finite-sample bound for the Gaussian approximation of $u_m(Y)$, a potentially infinite-order U-statistic (IOUS). Notably if $\tilde \beta_{M,\nu}$ is bounded, the only dependence on $m$ in the bound is through $\rho_{m,n;M}$; when $d$ is fixed and $ \Var\, \mean[ u(Y_1, \ldots, Y_m) \, |\, Y_1, \ldots, Y_j ] \in (0, \infty)$ is fixed for all $j \in [m]$, we can compute 
    \begin{align*}
        \rho_{m,n;1} 
        \;\leq\; 
        \mfrac{n}{m^2} \msum_{r=2}^m \Big( \mfrac{m^2 e^2}{n} \Big)^r
        \;=\; O\Big( \mfrac{m^2}{n} \Big)
        \hspace{5em} 
        \text{ if }\; 
        m^2 < \mfrac{n}{2e^2}\;.
    \end{align*}
    Therefore when $d$ is fixed and $M=1$, Gaussianity holds if $m=o(n^{-1/2})$. This agrees with known optimal growth condition on $m$ for Gaussianity of 1d IOUS \citep{van1988elementary}. On the other hand, when $d$ and the variances are allowed to change, $\rho_{m,n;1}$ has more complex dependence on $n$; this can lead to a more stringent or relaxed condition on the size of $m$.

    \vspace{.5em}

    \item When $M > 1$, $U_M^{(K)}(\Xi^{(K)})$ is a degree-$M$ polynomial of Gaussians. 
     By 
        \cref{prop:gaussian}, the asymptotic limit can still be Gaussian in high dimensions. This  behaviour is specific to the high-dimensional regime, where the Gaussian polynomial approximation is allowed to vary in $n$. See \Cref{example:degen:V:normal} for an analogous example on V-statistics.
\end{itemize}
The above two cases recover many existing works on the normal approximation of a high-dimensional degree-two U-statistic: Some of those results come without a fourth-moment condition and require assumptions similar to those for (i) \citep{chen2010two, harchaoui2020asymptotics, wang2015high, yan2021kernel}, whereas others consider a fourth moment condition and fall under (ii) \citep{gao2023two,bhattacharya2022asymptotic}. A practical consequence of (ii) is that a degenerate U-statistic -- often found in hypothesis testing and which requires numerical approximation due to its classical non-Gaussianity \cite{leucht2013dependent} -- may be Gaussian in high dimensions. In view of the necessity statement in \cref{prop:gaussian}, one may argue that the two cases cover most, if not all of the situations of Gaussianity.

Meanwhile, \cref{prop:higher:u} highlights when $u_m(Y)$ may be asymptotically non-Gaussian. Here, non-Gaussianity happens not according to the degeneracy of the U-statistic, but depending on the relative sizes of the rescaled variances $\sigma_{m,n;j}^2$, each corresponding to the variance of the degree-$j$ Hoeffding decomposition. If the ratio of the variances change as $d$ grows, the limiting distribution of $u_m(Y)$ can transition from one low-degree polynomial of Gaussians to a higher one, or vice versa. This phase transition effect has been highlighted in the literature for degree-two U-statistics \citep{bhattacharya2022asymptotic, huang2023high}. 
\cite{huang2023high} also includes empirical evidence of the transition for a U-statistic used in high-dimensional kernel-based distribution tests:
There, the ratio of variances reduces to a comparison between problem-specific hyperparameter choices, which determine whether the phase transition occurs.

\begin{remark}[Further applications] (i) The same argument in this section can be extended to a U-statistic of non-identically distributed data, although a more elaborate decomposition than \eqref{eq:hoeffding:decompose} is required to accommodate for the fact that $\mean[u(y_1, \ldots, y_r, Y_1, \ldots, Y_{m-r})]$ may not be equal to $\mean[u(y_1, \ldots, y_r, Y_2, \ldots, Y_{m-r+1})]$. (ii) One notable statistical application of degree-$m$ U-statistics is the study of subgraph count statistics. In a line of recent work  \cite{hladky2021limit,bhattacharya2023fluctuations,kaur2021higher}, substantial efforts are spent on characterising the same phase transition effect we discussed above. There, asymptotic degeneracy arises not because of dimension dependence, but because of geometric properties of the graph models. Those degeneracies can again be interpreted through our variance ratios $\rho_{m,n;M}$, and our \Cref{prop:higher:u} can be applied to obtain a finite-sample bound for those setups; see \Cref{appendix:subgraph:u}. In a concurrent work, Chatterjee, Dan, and Bhattacharya \citep{chatterjee2024higher}
obtain asymptotic limits for graphon models with higher-order degeneracies and for joint subgraph counts, with additional geometric insights.
\end{remark}

\subsection{Phase transitions of MMD in two-sample tests with imbalanced data} \label{sec:MMD:imbalance} Let $X=(X_i)_{i \leq n_1}$ and $Y=(Y_i)_{i \leq n_2}$ be two independent datasets of $\R^d$ vectors, with $X_i \overset{i.i.d.}{\sim} P_X$ and $Y_i \overset{i.i.d.}{\sim} P_Y$. Given a symmetric kernel function $\kappa: \R^d \times \R^d \rightarrow \R$ (see \cite{steinwart2012mercer} for a definition), we study the maximum mean discrepancy (MMD) \cite{gretton2012kernel},
\begin{align*}
    q_{\rm MMD}
    \;\coloneqq\;
    \mean[\kappa(X_1,X_2)]  - 2 \mean[\kappa(X_1,Y_1)] + \mean[\kappa(Y_1, Y_2)] 
    \;,
\end{align*}
which is used for the two-sample distribution test $H_0: P_X = P_Y$ v.s. $H_1: P_X \neq P_Y$. A popular unbiased estimator of $q_{\rm MMD}$ takes the form of an incomplete U-statistic, namely 
\begin{align*}
    U_{\rm MMD} 
    \;\coloneqq&\; 
    \mfrac{1}{n_1(n_1-1)} \sum_{i \neq j}^{n_1} \kappa(X_i, X_j)
    -
    \mfrac{2}{n_1 n_2} \sum_{i \leq n_1, j \leq n_2} \kappa(X_i, Y_j)
    +
    \mfrac{1}{n_2 (n_2-1)} \sum_{i \neq j}^{n_2} \kappa(Y_i, Y_j)
    \;.
\end{align*}
With a suitable kernel choice $\kappa$, the population quantity $q_{\rm MMD}$ measures the difference between $P_X$ and $P_Y$, whereas the distribution of the centred statistic $U_{\rm MMD} - q_{\rm MMD}$ determines the size and power of tests designed based on the test statistic $U_{\rm MMD}$; see \cite{gretton2012kernel} for details. 

The behaviour of $U_{\rm MMD}$ and its variants in the high-dimensional regime has been a subject of great interest in statistics and machine learning \citep{reddi2015high, ramdas2015decreasing, yan2021kernel, gao2021two, kim2024dimension, shekhar2022permutation}. When the sample sizes are balanced, i.e.~$n_1=n_2=n$, $U_{\rm MMD}$ can be re-expressed as a degree-two complete U-statistic, i.e.~of the form \eqref{eq:defn:u} with $m=2$, by identifying 
\begin{align*}
    \tagaligneq \label{eq:MMD:balanced}
    &\;
    U_{\rm MMD} \;=\; \mfrac{1}{n(n-1)} \msum_{i \neq j}^n \tilde \kappa( V_i, V_j)\;,
    \quad 
    V_i \;\coloneqq\; ( X_i, Y_i)\;,
    \\
    &\;
    \tilde \kappa( (x_1,y_1), (x_2,y_2)) 
    \;\coloneqq\;
    \kappa(x_1, x_2)
    +
    \kappa(y_1, y_2)
    -
    \mfrac{n-1}{n}
    \kappa(x_1, y_2)
    -
    \mfrac{n-1}{n}
    \kappa(x_2, x_1)
    \;.
\end{align*}
This allows  $U_{\rm MMD}$ to be analysed by directly applying results on general complete U-statistics: In the high-dimensional case, \citep{huang2023high} established a phase transition across two possible asymptotic limits as $\kappa$ varies and related it to the test performance of $U_{\rm MMD}$. 

When the sample sizes are imbalanced $n_1 \neq n_2$, however, the analysis of $U_{\rm MMD}$ becomes more complex, as each U-statistic in $U_{\rm MMD}$ involve linear and quadratic components, each of which may dominate or vanish in the limit. Notably if $n_1 = \gamma n_2$ with $\gamma \neq 1$, i.e.~the imbalance ratio is not negligible, $U_{\rm MMD}$ cannot be expressed as \eqref{eq:MMD:balanced} and new results are required to understand its high-dimensional asymptotics. This is again enabled by our universality result. One possible phase transition again arises from comparing the variances of the linear and quadratic components of $U_{\rm MMD}$, namely
\begin{align*}
    \sigma_1^2 
    \;\coloneqq&\;
    \mfrac{4 \,
        \Var \big[ 
        \mean[ \kappa(X_1, X_2) | X_1] - \mean[ \kappa(X_1, Y_1) | X_1] 
        \big]
    }{n_1}
    +
    \mfrac{4 \,
        \Var \big[ 
        \mean[ \kappa(Y_1, Y_2) | Y_1] - \mean[ \kappa(X_1, Y_1) | Y_1] 
        \big]
    }{n_2}\;,
    \\
     \sigma_2^2 
    \;\coloneqq&\;
        \mfrac{2 \, \Var[\kappa(X_1, X_2)]}{n_1 (n_1 -1 )}
        +
        \mfrac{2 \, \Var[\kappa(Y_1, Y_2)]}{n_2 (n_2 -1 )}
        +
        \mfrac{4 \, \Var[\kappa(X_1, Y_1)]}{n_1 n_2}
        \;.
\end{align*}
Write $\| \argdot \|_{L_\nu;{\rm ctr}}^\nu \coloneqq \mean | \argdot - \mean[\argdot] |^\nu$ as the $\nu$-th centred moment of a random variable. We  denote the corresponding $\nu$-th moments by 
\begin{align*}
    M_{\nu;1}^\nu
    \;\coloneqq&\;
    \mfrac{
        \big\|  
            \mean[ \kappa(X_1, X_2) | X_1] - \mean[ \kappa(X_1, Y_1) | X_1] 
        \big\|_{L_\nu;{\rm ctr}}^\nu
    }{n_1}
    +
    \mfrac{
        \big\|  
        \mean[ \kappa(Y_1, Y_2) | Y_1] - \mean[ \kappa(X_1, Y_1) | Y_1] 
        \big\|_{L_\nu;{\rm ctr}}^\nu
    }{n_2}\;,
    \\
     M_{\nu;2}^\nu
    \;\coloneqq&\;
        \mfrac{ \big\| \kappa(X_1, X_2) \big\|_{L_\nu;{\rm ctr}}^\nu}{n_1 (n_1 -1 )}
        +
        \mfrac{ \big\|   \kappa(Y_1, Y_2) \big\|_{L_\nu;{\rm ctr}}^\nu}{n_2 (n_2 -1 )}
        +
        \mfrac{  \big\|  \kappa(X_1, Y_1)\big\|_{L_\nu;{\rm ctr}}^\nu}{n_1 n_2}
        \;.
\end{align*}
We also assume a functional decomposition for $\kappa$ similar to that of \Cref{sec:degree:m:U:stats}.

\begin{assumption} \label{assumption:L_nu:MMD} For some fixed $\nu \in (2,3]$, \Cref{assumption:L_nu} holds for $u =\kappa$ with $m=2$ under the joint probability measures $P \times P$, $P \times Q$ as well as $Q \times Q$.
\end{assumption}

The next result is a high-dimensional analogue of the classical limit theorems for MMD \cite{gretton2012kernel}, and is again proved by  universality (\Cref{thm:main}) and variance domination (\Cref{thm:VD}). For the degenerate limit, we recall that the classical degenerate limit of a degree-two U-statistic is expressed through an infinite sum of weighted and centred chi-squares \cite{serfling1980approximation,gretton2012kernel}. To formalise this, we use $U_{K, n_1, n_2}$ to denote a quadratic form of $4K$ mean-zero univariate normal variables, whose explicit expression is included in the proof in \Cref{appendix:MMD}, and assume the existence of the distributional limit $\lim_{K \rightarrow \infty}^{\rm w} U_{K, n_1, n_2}$. This exactly corresponds to the infinite mixture of chi-squares limit in the classical regime. The non-classical regime introduces additional $n_1$ and $n_2$ dependence, which requires the result to be presented in terms of a sequential distributional limit.

\begin{proposition} \label{prop:MMD:imbalanced}  Suppose \Cref{assumption:L_nu:MMD} holds and that $\frac{M_{\nu;1}}{\sigma_1}$ and $\frac{M_{\nu;2}}{\sigma_2}$ are both $O(1)$. Under the non-classical asymptotic with $n_1, n_2 \rightarrow \infty$, we have
\begin{proplist}
    \item if $\sigma_1 / \sigma_2 = \omega(1)$, then
    $
        \sigma_1^{-1} (U_{\rm MMD} - q_{\rm MMD}) \;\xrightarrow{d}\; \cN(0,1)
    $;
    \item if $\sigma_1 / \sigma_2 = o(1)$, then there exists some sequence of Gaussian quadratic forms $U_{K, n_1, n_2}$ indexed by $K, n_1, n_2 \in \N$ such that
    $
        \sigma_2^{-1} (U_{\rm MMD} - q_{\rm MMD}) 
        \;\xrightarrow{d}\; 
       U_* 
    $,  assuming that the sequential limit $U_* \coloneqq \lim_{n_1, n_2 \rightarrow \infty}^{\rm w} \,
       \lim_{K \rightarrow \infty}^{\rm w}  U_{K, n_1, n_2}$ exists.
\end{proplist}
\end{proposition}

 As with the complete U-statistics in \Cref{sec:degree:m:U:stats}, the classical degeneracy condition is now replaced by a condition on the ratio $\sigma_1 / \sigma_2$. The limit for the degenerate case $\sigma_1/\sigma_2=o(1)$ may simplify to a Gaussian by \Cref{prop:gaussian}; in this case, a computable confidence interval can be formed even under degeneracy of $U_{\rm MMD}$. This effect was examined in detail by \cite{yan2021kernel} for MMD with an isotropic kernel, whereas our \Cref{prop:MMD:imbalanced} holds more generally.


As mentioned, a difference of the imbalanced case from the balanced case is how a combination of different components in each of the three sums in $U_{\rm MMD}$ can dominate in the limit. (i) of \Cref{prop:MMD:imbalanced} corresponds to when the linear components from all three sums dominate, and (ii) corresponds to when the quadratic components dominate. \color{black} In the intermediate regime where $\sigma_1 / \sigma_2 = \Theta(1)$, different mixture limits can arise depending on how $n_1$ scales with $n_2$. For instance, consider the case where the first term of $\sigma_1^2$ vanishes. If the remaining $\Var[\argdot]$ terms are $\Theta(1)$ and if the sample sizes scale as $n_1 = \Theta(n_2^{1/2})$, the second term of $\sigma_1^2$ is on the same scale as $\sigma_2^2$. The resultant limit is a mixture of the non-degenerate Gaussian component of the U-statistic $\frac{1}{n_2 (n_2-1)} \msum_{i \neq j}^{n_2} \kappa(Y_i, Y_j)$, which evaluates the kernel on the dataset $(Y_i)_{i \leq n_2}$, and the degenerate second-order component of the U-statistic $\frac{1}{n_1 (n_1-1)} \msum_{i \neq j}^{n_2} \kappa(X_i, X_j)$, which evaluates the kernel on the dataset $(X_i)_{i \leq n_1}$. When the $\Var[\argdot]$ terms are no longer $\Theta(1)$, e.g.~when they are affected by the growth of $d$, this mixture limit can arise additionally due to the relative scaling between $d$, $n_1$ and $n_2$. In \Cref{appendix:MMD}, we examine this for a high-dimensional distribution test with mean and variance shifts: For a classically non-degenerate MMD test statistic, we show how mixture limits can arise depending on how the high-dimensional regime interacts with the sample size imbalance.

\subsection{Non-classical Berry-Ess\'een bounds for high-dimensional averages and data with non-invertible covariance matrices}  \label{sec:high:d:averages}

As pointed out in \Cref{remark:compare:high:d:CLT}, the universality approximation for the degree-$2$ V-statistic $v_2(X)$ considered in \Cref{example:simple:V} is exactly the Gaussian approximation of a high-dimensional average in centred Euclidean balls. Compared to existing works \citep{sazonov1972bound,bentkus2003dependence,zhilova2020nonclassical,fang2024large}, our results accommodate data vectors with non-invertible covariance matrices. We formalise the statement here.

Since our \Cref{prop:simpler:V} handles degree-$m$ V-statistics up to $m \log m = o(\log n)$, for $m$ even, it also yields Gaussian approximation of an average with respect to centred $l_m$ balls:
\begin{align*}
    \cB_m \;\coloneqq&\; \{ B_r \,|\, r \geq 0 \} 
    &\text{ where }&&
    B_r \;\coloneqq&\; \big\{ x \in \R^d \,\big|\, \| x \|_{l_m} \leq r \big\} \;.
\end{align*}
These are the statistical objects of interest for constructing $l_m$ confidence spheres for centred empirical averages. Write $\Sigma_{jj'}$ as the $(j,j)$-th entry of $\Sigma=\Var[X_1]$. \Cref{prop:simpler:V} implies:

\begin{corollary} \label{cor:highd:CLT} Fix $\nu \in (2,3]$. Suppose $m$ is even, $2m^2 \leq n$ and $m \log m = o(n)$. Under \Cref{assumption:sub:Weibull}, there exists some absolute constant $C > 0$ such that
\begin{align*}
    \sup_{B \in \cB_m}
        \big| \P \big( S_n \in B\big) - \P\big( S^Z_n \in B \big) \big| 
    \;\leq\; 
    C m^{3 + \theta} n^{- \frac{\nu-2}{2\nu m + 2}}
    \,
    \bigg( 
            \mfrac{
                1
                +
                o(1)
            }
            {
                \frac{1}{d^2} \sum_{1 \leq j,j' \leq d}  \Sigma_{jj'}^m  
                + 
                o(1)
            }
        \bigg)^{\nu/(2 \nu m + 2)}
        \;.
\end{align*}
If additionally the entries of $X_1$'s are uniformly bounded and $\Var[X_1]=I_d$, we have
\begin{align*}
        \msup_{B \in \cB_m}
            \big| \P \big( S_n \in B\big) - \P\big( S^Z_n \in B \big) \big| 
        \;\leq\; 
        C m^{3+\theta} \,
            \Big( 
                \mfrac{
                    d (1+ o(1) )
                }
                {
                      n
                }
            \Big)^{(\nu-2)/(2 \nu m + 2)}
            \;.
\end{align*}
\end{corollary}

The first statement follows directly from \Cref{prop:simpler:V}, and the second follows from an analogous calculation as \Cref{remark:compare:high:d:CLT}(ii). Note that for $\Sigma = I_d$, the condition on dimension does not depend on $m$, whereas $m$ can interact with $d$ and $n$ for more general $\Sigma$. As our main universality result applies to a more flexible polynomial, we expect it to be applicable to constructing confidence spheres for more complicated high-dimensional statistics.

A notable feature of  \Cref{prop:simpler:V}, and subsequently \Cref{cor:highd:CLT}, is that these bounds doe not require $\Sigma$ to be invertible (see also \Cref{remark:compare:high:d:CLT}(i)). This enables us to compute the bounds easily for data vectors that may be sparse or low-ranked, which are commonly considered in statistical applications with high-dimensionality \citep{fan2011sparse,basu2015regularized,wang2016statistical}. Recall that in \Cref{example:simple:V}(iv), we have considered how sparsity can affect the dimension dependence of our bound in the case $m=2$. The same discussion applies to a general $m$: When the sparsity structure is known and exact, one can apply \Cref{prop:simpler:V} with a different coefficient tensor $S$ to improve dimension dependence; otherwise, sparsity generally hurts dimension dependence of our bound.

\subsection{Non-classical delta method for high dimensional data} \label{sec:delta:method} Our results can also be applied to develop a non-classical delta method. Let $(X_i)_{i \leq n}$ be i.i.d.~and, for a smooth function $g: \R^d \rightarrow \R$, define
\begin{align*}
    \hat g(X) 
    \;\coloneqq\; 
    g \Big( \mfrac{1}{n} \msum_{i=1}^n X_i \Big)
    \;=\; 
    g \Big( \mfrac{1}{n} \msum_{i=1}^n \bar X_i  + \mean [X_1] \Big)\;, \qquad \text{ where } \bar X_i = X_i - \mean[X_1]\;.
\end{align*}
$\hat g(X)$ is called a plug-in estimator of $g(\mean[X_1])$. A guiding example is $g_{\rm toy}(x) \coloneqq x^\top x$, which yields a simple V-statistic
\begin{align*}
    \hat g_{\rm toy}(X) 
    \;\coloneqq\; 
    g_{\rm toy} \Big( \mfrac{1}{n} \msum_{i=1}^n X_i \Big)
    \;=\;
    \mfrac{1}{n^2} \msum_{i,j=1}^n X_i^\top X_j \;. 
\end{align*}
To apply the delta method, 
consider a Taylor expansion
\begin{align*}
    \hat g_{\rm toy}(X) 
    -
    g_{\rm toy} (\mean[X_1])
    \;=\; 
    \mfrac{2}{n} \msum_{i=1}^n \bar X_i^\top \mean[X_1]
    \,+\,
    \mfrac{1}{n^2} \msum_{i,j=1}^n \bar X_i^\top \bar X_j\;.
\end{align*}
Classically when $d$ is fixed, normality of the plug-in estimator follows from the first-order delta method whenever $\partial g(\mean[X_1]) \neq 0$. For $\hat g_{\rm toy}(X)$, this condition is equivalent to $\mean[X_1] \neq 0$. 
 In the high-dimensional regime, however, the moment terms above can vary with $n$ through their $d$-dependence:
Even if $\partial g(\mean[X_1])$ is non-zero, the first-order term may still be asymptotically small compared to the second-order term. This is analogous to the phase transition effect observed for the U-statistics in \Cref{example:simple:U} and \Cref{sec:degree:m:U:stats}, and arises from variance domination (\Cref{thm:VD}). For delta methods, the implication is that the first-order delta method fails to hold, and the plug-in estimator is no longer asymptotically normal. The results are notationally cumbersome to state, although the intuitions resemble the discussion in \Cref{sec:degree:m:U:stats}. 
We include a formal statement and discussion of the results in \Cref{appendix:delta:method}.

 \subsection*{Acknowledgements}
 KHH acknowledges funding from the Gatsby Charitable Foundation and the EPSRC grant EP/Y028783/1 (Prob\_AI). MA was partially supported by ONR grant N000142112664. PO is supported by the Gatsby Charitable Foundation. 


\bibliographystyle{imsart-number}

\bibliography{ref}

\newpage
\setcounter{page}{1}
\thispagestyle{empty}
\appendix

\input{appendix_main}

\end{document}